\newtheorem{theorem}{Theorem}
\newtheorem{lemma}[theorem]{Lemma}
\newtheorem{corollary}[theorem]{Corollary}
\newtheorem{proposition}[theorem]{Proposition}
\font\sc=rsfs10
\newcommand{\cC}{\sc\mbox{C}\hspace{1.0pt}}
\font\scc=rsfs7
\newcommand{\ccC}{\scc\mbox{C}\hspace{1.0pt}}
\begin{document}
\title[Simple transitive 2-representations for projective functors]
{Simple transitive $2$-representations for two non-fiat $2$-categories of projective functors}

\author{Volodymyr Mazorchuk and Xiaoting Zhang}

\begin{abstract}
We show that any simple transitive $2$-representation of the $2$-ca\-te\-go\-ry of projective
endofunctors for the quiver algebra  of $\Bbbk(\xymatrix{\bullet\ar[r]&\bullet})$ and
for the quiver algebra of $\Bbbk(\xymatrix{\bullet\ar[r]\ar@/^/@{.}[rr]&\bullet\ar[r]&\bullet})$
is equivalent to a cell $2$-rep\-re\-sen\-ta\-ti\-on.
\end{abstract}

\maketitle

\section{Introduction and description of the results}\label{s0}

Classification problems are interesting and important problems in the classical representation theory.
For example, classifications of various classes of simple or indecomposable modules over
different classes of algebras played significant role in both development and applications
of modern representation theory.

Higher representation theory is a recent direction of representation theory that
takes its origins from the papers \cite{BFK,CR,Ro1,Ro2}. Of particular interest in
higher representation theory is the study of so-called finitary $2$-categories as
the latter are natural $2$-analogues of finite dimensional algebras.
Initial abstract study of finitary $2$-categories and the corresponding $2$-representation
theory was done in  \cite{MM1,MM2,MM3,MM4,MM5,MM6,Xa}.

As an outcome of this study, one interesting and important class of $2$-representations, called
{\em simple transitive $2$-representations}, was defined in \cite{MM5}. These $2$-rep\-re\-sen\-ta\-tions are
natural $2$-analogues of usual simple modules over algebras. Therefore the problem of classification of
simple transitive $2$-representations is natural and interesting. In several cases, it turns out that
simple transitive $2$-representations can be classified, see for example various results in
\cite{MM5,MM6,GM1,Zh2,Zi}. We also refer the reader to \cite{GM2,KM,Ma,Zh1} to related questions
and applications. In particular, in \cite{KM}, classification of simple transitive $2$-representations
for the $2$-category of Soergel bimodules over the coinvariant algebra of the symmetric group was
crucially used  for classification of projective functors in parabolic category $\mathcal{O}$ for $\mathfrak{sl}_n$.

The most basic example of a $2$-category is the $2$-category $\cC_A$ of projective functors
for a finite-dimensional algebra $A$ over an algebraically closed field $\Bbbk$, defined in
\cite[Subsection~7.3]{MM1}. In \cite{MM3,MM6}, it is shown that categories of the form $\cC_A$
essentially exhausts a natural class of ``simple'' finitary $2$-categories possessing weak
involutions. For such $2$-categories, it was shown in \cite{MM5,MM6} that simple transitive
$2$-representations are exactly the cell $2$-representations, defined in  \cite{MM1}.
Existence of a weak involution on a $2$-category restricts the classification result to the
case when $A$ is a self-injective algebra.

The aim of the present paper to classify simple transitive $2$-representations of $\cC_A$
for the smallest possible non-self-injective algebra, namely the path algebra $A$ of the quiver
$1\longrightarrow 2$, over an algebraically closed field $\Bbbk$. It turns out that our approach
also extends, with a significantly increased amount of technical work, to the quiver algebra  of
$\Bbbk(\xymatrix{\bullet\ar[r]\ar@/^/@{.}[rr]&\bullet\ar[r]&\bullet})$, where, as usual, the
dotted arrow depicts the corresponding zero relation. Our main result is the following
theorem, we refer the reader to Sections~\ref{s0-1} for details on all definitions.

\begin{theorem}\label{mainresult}
For $A=\Bbbk(\bullet\to\bullet)$ or $A=\Bbbk(\xymatrix{\bullet\ar[r]\ar@/^/@{.}[rr]&\bullet\ar[r]&\bullet})$,
any simple transitive $2$-representation of the $2$-category $\cC_A$ is equivalent to a  cell $2$-representation.
\end{theorem}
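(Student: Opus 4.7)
The plan is to follow the now-standard strategy for classifying simple transitive $2$-rep\-re\-sen\-ta\-ti\-ons via cell analysis, while carefully handling the non-fiat obstruction caused by $A$ failing to be self-injective. First I would describe the multisemigroup of indecomposable $1$-morphisms of $\cC_A$ explicitly. For $A=\Bbbk(1\to 2)$, the non-identity indecomposable $1$-morphisms are the projective functors $F_{ij}=Ae_i\otimes_{\Bbbk}e_jA\otimes_A{-}$ for $i,j\in\{1,2\}$; for the three-vertex algebra with the zero relation there is a similar (but longer) list. With this in hand I would compute all products $F_{ij}\cdot F_{k\ell}$, determine the left, right and two-sided cells of $\cC_A$, and verify that the partially ordered set of two-sided cells has a unique minimal element $\cJ_0$ (containing $\mathbbm{1}$) and a unique maximal element $\cJ$ consisting of all $F_{ij}$'s. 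Then I would write down the cell $2$-representations associated to each left cell and read off their combinatorial invariants (ranks, action matrices of the $[F_{ij}]$ on the Grothendieck group).

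Next, for an arbitrary simple transitive $2$-representation $\mathbf{M}$, I would pin down its apex. Since $\cJ_0$ yields only the trivial $2$-representation, the interesting case has apex $\cJ$, and I would show that any $F_{ij}$ acts non-trivially on $\mathbf{M}$. The key tool is to look at the action matrices $M_{ij}$ of $[F_{ij}]$ on the Grothendieck group of $\mathbf{M}$: these are matrices with non-negative integer entries satisfying the same multiplicative relations as the $[F_{ij}]$ themselves, and the matrix $\sum_{i,j} M_{ij}$ is primitive in the Perron--Frobenius sense because $\mathbf{M}$ is transitive.

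The central step is then to exploit the identities $F_{ij}F_{k\ell}\cong F_{i\ell}^{\oplus \dim(e_jAe_k)}$ together with the fact that the $F_{ii}$ are idempotent-like (up to multiplicity). This forces each column-indexed family $\{M_{ij}\}_j$ to lie in a one-dimensional subspace and, dually, identifies the rank of each $M_{ij}$ with $1$. Using the Perron--Frobenius eigenvector one extracts a distinguished indecomposable object $X$ in $\mathbf{M}$, proves that $\{F_{i,j_0}X\}_i$ exhaust the indecomposables (for a well-chosen $j_0$), and concludes that $\mathbf{M}$ has the same rank as a cell $2$-representation attached to the left cell of $F_{\bullet j_0}$. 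Once the combinatorics matches, I would promote this numerical coincidence to a genuine equivalence of $2$-representations by constructing a morphism into the cell $2$-representation induced by $X$ and invoking the simple-transitivity of both sides.

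The main obstacle, and the reason the argument does not just reduce to \cite{MM5,MM6}, will be the non-fiatness: for the second algebra the various $F_{ij}$ no longer pair up by adjunction in the tidy way available in the self-injective setting, so the usual trick of using biadjoint structure to control multiplicities fails. I expect the hardest part to be the rank-one analysis of $M_{ij}$ in the three-vertex case, where the zero relation produces extra vanishing among composed projective functors and forces a delicate case distinction according to which left cell of $\cJ$ is "supported" by the Perron--Frobenius vector of $\mathbf{M}$; the proof will need explicit matrix computations that were trivial in the self-injective situation but now require bookkeeping for each pair $(i,j)$.
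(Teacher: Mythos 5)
Your overall architecture (reduce to the faithful case via $\mathcal{J}$-simplicity, study the integer matrices $M_{ij}$ satisfying the relations \eqref{eqnn1}, use Perron--Frobenius to constrain $M=\sum M_{ij}$, then upgrade the combinatorial match to an equivalence) is the same as the paper's. However, there are two genuine gaps in the middle of your argument.

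First, you correctly note that biadjointness is unavailable, but you do not supply a replacement, and without one the plan stalls before the matrix analysis even becomes useful. The paper's substitute is the observation that both algebras have a projective-injective module, i.e.\ $Ae_s\cong\mathrm{Hom}_{\Bbbk}(e_tA,\Bbbk)$ for suitable $s,t$; this yields one-sided adjoint pairs $(\mathrm{F}_{it},\mathrm{F}_{si})$, hence exactness of $\overline{\mathbf{M}}(\mathrm{F}_{si})$, and then --- via Lemmata~\ref{lemmamm5} and \ref{lemmamm7} and the composition trick of Lemma~\ref{lem74-n} --- that every $\overline{\mathbf{M}}(\mathrm{F}_{ij})$ is a projective endofunctor of $B$-mod. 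This is what lets one identify each $\mathrm{F}_{ij}$ with some $\mathrm{G}_{kl}=B\epsilon_k\otimes\epsilon_lB\otimes_B{-}$ and read off composition multiplicities $[P_i:L_j]$ of $B$ from the matrices. These same partial adjunctions are also used repeatedly (Lemmata~\ref{lem211} and \ref{lem212}) to kill cases in the three-vertex analysis.

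Second, your claim that the Perron--Frobenius analysis will show $\mathbf{M}$ ``has the same rank as a cell $2$-representation'' and that ``the combinatorics matches'' is false as stated: the matrix equations admit genuine extra solutions that are not excluded by any combinatorial argument. For $\Bbbk(\bullet\to\bullet)$ one gets, besides the correct $2\times 2$ solution, a consistent $3\times 3$ solution ($M=M_6$) and a second $2\times 2$ solution ($M=M_3$) in which $B\cong\Bbbk\oplus\Bbbk$. These satisfy all the multiplicative relations, so your rank-one and eigenvector arguments cannot see them. The paper eliminates them by a non-combinatorial input: the Hom-spaces \eqref{eq101} between the $\mathrm{F}_{ij}$ (e.g.\ $\mathrm{Hom}(\mathrm{F}_{21},\mathrm{F}_{12})\neq 0$) must act faithfully, whereas in the spurious cases $B$ is semisimple and the corresponding $\mathrm{Hom}(\mathrm{G}_{kl},\mathrm{G}_{k'l'})$ vanishes. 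Relatedly, matching the rank is not enough for the final step; you must identify $B$ with $A$ as an algebra (via the Cartan data extracted from the $M_{ij}$) and check that each $\mathrm{F}_{ij}$ acts by the correct $\mathrm{G}_{ij}$ before invoking the standard comparison with the cell $2$-representation. Without these two ingredients the proof does not close.
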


Despite of the fact that the formulation of Theorem~\ref{mainresult} is rather similar to
the corresponding statement in the case when $A$ is self-injective, considered in \cite{MM5,MM6},
our approach to the proof is fairly different, since the general approach outlined in \cite{MM5,MM6}
does not apply. Our approach, rather, has many similarities with the approach in \cite{Zi}
and is mostly based on a careful analysis of all possible cases.

In Section~\ref{s0-1} we collect all necessary preliminaries for $2$-representation theory
of the $2$-category $\cC_A$. In Section~\ref{s51} we prove some general results about
$2$-representations of $\cC_A$ under the additional assumption that the algebra $A$ has
a non-zero projective injective module.

In Section~\ref{s2}, \ref{s1} and \ref{s7}, we collect the proof of Theorem~\ref{mainresult}
in the case $A=\Bbbk(\bullet\to\bullet)$. In more details, Section~\ref{s2} contains preliminaries
on $\cC_A$ for $A=\Bbbk(\bullet\to\bullet)$. Section~\ref{s1} contains combinatorial
results on certain integer matrices which allow us to specify three essentially different
cases which we have to deal with. In Sections~\ref{s7} we prove Theorem~\ref{mainresult}
for $A=\Bbbk(\bullet\to\bullet)$.

In Sections~\ref{s21}, \ref{s11}, \ref{s12n} and \ref{s19}, we collect the proof of Theorem~\ref{mainresult}
in the second case of the algebra $A=\Bbbk(\xymatrix{\bullet\ar[r]\ar@/^/@{.}[rr]&\bullet\ar[r]&\bullet})$.
In more details, in Section~\ref{s21} one finds preliminaries on $\cC_A$. Sections~\ref{s11} and \ref{s12n}
are devoted to finding out which integer matrix captures the combinatorics of
a faithful simple transitive $2$-representation of $\cC_A$. Finally,
Sections~\ref{s19} completes the proof of  Theorem~\ref{mainresult} for
the algebra $\Bbbk(\xymatrix{\bullet\ar[r]\ar@/^/@{.}[rr]&\bullet\ar[r]&\bullet})$.
\vspace{5mm}

{\bf Acknowledgment.} The first author is partially supported by the Swedish Research Council.
We thank Vanessa Miemietz for stimulating discussions.

\section{$2$-category $\cC_A$ and its $2$-representations}\label{s0-1}

\subsection{Notation and conventions}\label{s0-1.1}

Throughout the paper we work over an algebraically closed field $\Bbbk$ and abbreviate $\otimes_{\Bbbk}$
by $\otimes$. Unless explicitly stated otherwise, by a module, we mean a {\em left} module.
We compose maps from right to left. For a $1$-morphism $\mathrm{F}$, we denote by
$\mathrm{id}_{\mathrm{F}}$ the identity $2$-morphism for $\mathrm{F}$.

\subsection{$2$-category $\cC_A$}\label{s0-1.2}

We refer the reader to \cite{Le,Mc,Ma} for generalities on $2$-categories. A $2$-category is a
category which is enriched over the monoidal category $\mathbf{Cat}$ of small categories.

Let $A$ be a connected, basic, finite dimensional $\Bbbk$-algebra and $\mathcal{C}$ a small
category equivalent to  $A$-mod. Consider the $2$-category $\cC_A$
(which depends on $\mathcal{C}$) defined as follows:
\begin{itemize}
\item $\cC_A$ has one object $\mathtt{i}$ which we identify with $\mathcal{C}$;
\item $1$-morphisms in $\cC_A$ are endofunctors of $\mathcal{C}$ isomorphic to functors given by
tensoring with $A$-$A$-bimodules from the additive closure of both ${}_AA_A$ and ${}_AA\otimes A_A$;
\item $2$-morphisms in $\cC_A$ are natural transformations of functors.
\end{itemize}
The $2$-category $\cC_A$ is {\em finitary} in the sense of \cite[Subsection~2.2]{MM1}.

\subsection{$2$-representations $\cC_A$}\label{s0-1.3}

We consider the $2$-category $\cC_A$-afmod of all {\em finitary $2$-representations} of $\cC_A$.
In this $2$-category,
\begin{itemize}
\item objects are strict additive functorial actions of $\cC_A$ on additive, idempotent split,
Krull-Schmidt, $\Bbbk$-linear categories with finitely many isomorphism classes of indecomposable
objects and finite dimensional morphism spaces;
\item $1$-morphisms are $2$-natural transformations;
\item $2$-morphisms are modifications.
\end{itemize}
We refer the reader to \cite{MM3} for details.
Two $2$-representations are called {\em equivalent} provided that there is a
$2$-natural transformation between them whose restriction to each object  is an equivalence
of categories.

We also consider the $2$-category $\cC_A$-mod defined similarly using functorial action on
categories equivalent to module categories of finite dimensional $\Bbbk$-algebras.
There is the diagrammatically defined abelianization $2$-functor
\begin{displaymath}
\overline{\hspace{1mm}\cdot\hspace{1mm}}:\cC_A\text{-}\mathrm{afmod}\to \cC_A\text{-}\mathrm{mod},
\end{displaymath}
see \cite[Subsection~4.2]{MM2} for details.

A finitary  $2$-representation  of $\cC_A$ is called {\em transitive} provided that,
for any indecomposable objects $X$ and $Y$ in $\mathbf{M}(\mathtt{i})$, there is a $1$-morphism
$\mathrm{F}$ in $\cC_A$ such that $Y$ is isomorphic to a direct summand of $\mathbf{M}(\mathrm{F})\, X$.

A transitive $2$-representation $\mathbf{M}$ is called {\em simple} provided that
$\mathbf{M}(\mathtt{i})$ does not have non-zero proper $\cC_A$-invariant ideals.

For simplicity, we will often use the module notation $\mathrm{F}\, X$ instead of the
representation notation $\mathbf{M}(\mathrm{F})\, X$.

\subsection{Cells in $\cC_A$}\label{s0-1.4}

Let $1=e_1+e_2+\dots+e_n$ be a primitive decomposition of $1\in A$. Up to isomorphism, indecomposable
$1$-morphisms in $\cC_A$ are given by tensoring with ${}_AA_A$ or with ${}_AAe_i\otimes e_jA_A$,
where $i,j=1,2,\dots,n$. We fix a representative $\mathrm{F}_0$ in the isomorphism class
of $1$-morphisms which correspond to tensoring with ${}_AA_A$. For $i,j=1,2,\dots,n$,
we fix a representative  $\mathrm{F}_{ij}$ in the isomorphism class
of $1$-morphisms which correspond to tensoring with ${}_AAe_i\otimes e_jA_A$. The set of isomorphism classes
of indecomposable $1$-morphisms in $\cC_A$ has the natural structure of a multisemigroup, see
\cite[Section~3]{MM2} and \cite{KuMa}. Combinatorics of this structure is encoded into so-called
{\em left, right} and {\em  two-sided cells}. For $\cC_A$, the two sided-cells are
\begin{displaymath}
\mathcal{J}_0:=\{\mathrm{F}_0\} \quad\text{ and }\quad \mathcal{J}:=\{\mathrm{F}_{ij}\,:\,i,j=1,2,\dots,n\}.
\end{displaymath}
The two-sided cell $\{\mathrm{F}_0\}$ is a left and a right cell as well. Other left cells are
\begin{displaymath}
\{\mathrm{F}_{ij}\,:\,i=1,2,\dots,n\},\quad j=1,2,\dots,n.
\end{displaymath}
Other right cells are
\begin{displaymath}
\{\mathrm{F}_{ij}\,:\,j=1,2,\dots,n\},\quad i=1,2,\dots,n.
\end{displaymath}
As usual, we have
\begin{equation}\label{eqnn1}
\mathrm{F}_{ij}\circ\mathrm{F}_{st} =\mathrm{F}_{it}^{\oplus \dim(e_jAe_s)}.
\end{equation}
We set
\begin{displaymath}
\mathrm{F}:=\bigoplus_{i,j=1}^n \mathrm{F}_{ij}
\end{displaymath}
and note that
\begin{equation}\label{eqnn0}
\mathrm{F}\circ\mathrm{F}\cong \mathrm{F}^{\oplus\dim(A)}
\end{equation}
All $1$-morphisms in the additive closure of $\mathrm{F}$ are called
{\em projective endofunctors} of $\mathcal{C}$. Similarly for $A$-mod.

As usual, we will say that a pair $(\mathrm{F}_{ij},\mathrm{F}_{st})$ of
$1$-morphisms is a pair of {\em adjoint} $1$-morphisms provided that there exist $2$-morphisms
\begin{displaymath}
\alpha:\mathrm{F}_{ij}\mathrm{F}_{st}\to \mathrm{F}_{0}\quad\text{ and }\quad
\beta:\mathrm{F}_{0}\to \mathrm{F}_{st}\mathrm{F}_{ij}
\end{displaymath}
such that
\begin{displaymath}
(\alpha\circ_0 \mathrm{id}_{\mathrm{F}_{ij}})\circ_1
(\mathrm{id}_{\mathrm{F}_{ij}}\circ_0\beta)=\mathrm{id}_{\mathrm{F}_{ij}}
\quad\text{ and }\quad
(\mathrm{id}_{\mathrm{F}_{st}}\circ_0\alpha)\circ_1
(\beta\circ_0\mathrm{id}_{\mathrm{F}_{st}})=\mathrm{id}_{\mathrm{F}_{st}}.
\end{displaymath}

The $2$-category $\cC_A$ is $\mathcal{J}$-simple in the sense that any non-zero two-sided $2$-ideal of
$\cC_A$ contains the identity $2$-morphisms for all $1$-morphisms given by projective endofunctors,
see \cite{MM2,Ag}.

\subsection{Cell $2$-representations}\label{s0-1.5}

The first example of a finitary $2$-representation of $\cC_A$ is the {\em principal}
$2$-representation $\mathbf{P}:=\cC_A(\mathtt{i},{}_-)$. This has a unique maximal
$\cC_A$-invariant ideal and the corresponding quotient is the cell
$2$-representation $\mathbf{C}_{\mathcal{L}}$, where $\mathcal{L}=\{\mathrm{F}_0\}$.

For any other left cell $\mathcal{L}$, the additive closure of elements in $\mathcal{L}$
gives a $2$-sub\-re\-pre\-sen\-ta\-ti\-on of $\mathbf{P}$. This $2$-sub\-rep\-re\-sen\-ta\-ti\-on again has a
unique maximal  $\cC_A$-invariant ideal and the corresponding quotient is the cell
$2$-representation $\mathbf{C}_{\mathcal{L}}$. This latter cell $2$-representation is
equivalent to the defining action of $\cC_A$ on the category $A$-proj of projective
objects in $A$-mod, see \cite{MM1} for details.

\subsection{Matrices in the Grothendieck group}\label{s0-1.6}

Let $\mathbf{M}$ be a finitary $2$-representation of $\cC_A$ and $X_1$, $X_2$,\dots, $X_k$ be a
fixed complete and irredundant
list of representatives of isomorphism classes of indecomposable objects in $\mathbf{M}(\mathtt{i})$.
For a $1$-morphism $\mathrm{G}$ in $\cC_A$, we denote by $[\mathrm{G}]$ the $k\times k$ matrix with
non-negative integer coefficients where, for $i,j=1,2,\dots,k$, the coefficient in the intersection of the
$i$-th row and the $j$-th column gives the number of indecomposable direct summands of
$\mathbf{M}(\mathrm{G})\, X_j$ which are isomorphic to $X_i$. Note that
$[\mathrm{G}\oplus \mathrm{H}]=[\mathrm{G}]+[\mathrm{H}]$ and
$[\mathrm{G}\circ \mathrm{H}]=[\mathrm{G}][\mathrm{H}]$.

\subsection{Action on simple transitive $2$-representations}\label{s0-1.7}

The following statement is proved in \cite[Lemma~12]{MM5}.

\begin{lemma}\label{lemmamm5}
Let $\mathbf{M}$ be a simple transitive $2$-representation of $\cC_A$. Then, for any non-zero object
$X\in\overline{\mathbf{M}}(\mathtt{i})$, the object
$\mathrm{F}\,X$ is projective in $\overline{\mathbf{M}}(\mathtt{i})$.
\end{lemma}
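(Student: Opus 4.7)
The plan is to show that $\mathrm{F}\,X$ lies in (the essential image of) $\mathbf{M}(\mathtt{i})\hookrightarrow\overline{\mathbf{M}}(\mathtt{i})$, since these are precisely the projective objects of the abelianization. Recall that the abelianization is constructed diagrammatically as a ``two-step complex'' category, and under this construction the full subcategory of projectives in $\overline{\mathbf{M}}(\mathtt{i})$ is equivalent to $\mathbf{M}(\mathtt{i})$ (via the canonical embedding which sends $Y\in\mathbf{M}(\mathtt{i})$ to the complex concentrated in degree zero). Consequently, the content of the lemma is that $\mathrm{F}\,X$, although defined via its diagrammatic presentation, in fact becomes isomorphic to a bona fide object of $\mathbf{M}(\mathtt{i})$.

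First I would choose a projective presentation $P_1\xrightarrow{g} P_0\to X\to 0$ of $X$ in $\overline{\mathbf{M}}(\mathtt{i})$ with $P_0,P_1\in\mathbf{M}(\mathtt{i})$ (this exists since the abelianization is, by construction, the free abelian category on $\mathbf{M}(\mathtt{i})$). Applying the right exact action functor $\mathbf{M}(\mathrm{F})$ yields a right exact sequence $\mathrm{F}\,P_1\xrightarrow{\mathrm{F}\,g}\mathrm{F}\,P_0\to \mathrm{F}\,X\to 0$ in which the first two terms are in $\mathbf{M}(\mathtt{i})$ (hence projective). Thus the problem reduces to showing that $\mathrm{F}\,X$ is a direct summand of $\mathrm{F}\,P_0$, i.e.\ that $\mathrm{F}\,g$ has a cokernel splitting as a direct summand inside $\mathbf{M}(\mathtt{i})$.

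The crucial step is to exploit the relation $\mathrm{F}\circ\mathrm{F}\cong\mathrm{F}^{\oplus\dim(A)}$ from \eqref{eqnn0}. Applying $\mathrm{F}$ once more and using this iso gives $\mathrm{F}(\mathrm{F}\,X)\cong(\mathrm{F}\,X)^{\oplus\dim(A)}$, and similarly for $\mathrm{F}\,P_0$ and $\mathrm{F}\,P_1$; naturality of the isomorphism $\mathrm{F}^2\cong\mathrm{F}^{\oplus\dim(A)}$ then identifies $(\mathrm{F}\,X)^{\oplus\dim(A)}$ with the cokernel of the map $(\mathrm{F}\,g)^{\oplus\dim(A)}$ between projective objects of $\mathbf{M}(\mathtt{i})$. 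In particular $(\mathrm{F}\,X)^{\oplus\dim(A)}$ is isomorphic to $\mathrm{F}(\mathrm{F}\,X)$, exhibiting $\mathrm{F}\,X$ as a direct summand of an object of the form $\mathrm{F}\,Y$ with $Y=\mathrm{F}\,X$. To turn this into projectivity of $\mathrm{F}\,X$ I would combine the above with the simple transitivity of $\mathbf{M}$ and the $\mathcal{J}$-simplicity of $\cC_A$ (subsection~\ref{s0-1.4}): the latter implies that the $2$-ideal of $\cC_A$ acting trivially on $\mathbf{M}$ is zero, so the natural splitting maps $(\mathrm{F}\,X)^{\oplus\dim(A)}\twoheadrightarrow\mathrm{F}\,X$ can be lifted through the iso with $\mathrm{F}(\mathrm{F}\,X)$ to exhibit $\mathrm{F}\,X$ as a direct summand of a projective object, and hence as projective itself.

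The main obstacle is the last step: extracting an honest splitting, rather than just a Grothendieck-group identity, from the relation $\mathrm{F}\circ\mathrm{F}\cong\mathrm{F}^{\oplus\dim(A)}$. This is where the simple-transitivity hypothesis is essential, as it rules out the possibility that $\mathrm{F}\,X$ has a non-projective direct summand lying ``outside'' $\mathbf{M}(\mathtt{i})$ but still compatible with the iterated action of $\mathrm{F}$. Without simple transitivity the argument would fail, for instance on reducible transitive $2$-representations obtained by adding extraneous $\cC_A$-invariant ideals.
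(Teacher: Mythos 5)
First, a remark on context: the paper does not actually prove this lemma; it imports it verbatim as \cite[Lemma~12]{MM5}, so there is no in-paper argument to compare your proposal against, and I can only assess it on its own terms. Your set-up is correct: the projective objects of $\overline{\mathbf{M}}(\mathtt{i})$ are exactly the objects of $\mathbf{M}(\mathtt{i})$, the functor $\overline{\mathbf{M}}(\mathrm{F})$ is right exact, and it therefore suffices to show that, for a projective presentation $P_1\xrightarrow{g}P_0\to X\to 0$, the cokernel of $\mathrm{F}\,g$ is a direct summand of $\mathrm{F}\,P_0$.

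The crucial step, however, is circular. Applying $\mathrm{F}$ once more and using $\mathrm{F}\circ\mathrm{F}\cong\mathrm{F}^{\oplus\dim A}$ yields only the identity $(\mathrm{F}\,X)^{\oplus\dim A}\cong\mathrm{F}(\mathrm{F}\,X)$, which exhibits $\mathrm{F}\,X$ as a summand of $\mathrm{F}\,Y$ for $Y=\mathrm{F}\,X$ --- an object which is projective if and only if $\mathrm{F}\,X$ itself is, i.e.\ exactly what is to be proved. This identity holds in every finitary $2$-representation of $\cC_A$, including ones that are not simple transitive, so it cannot by itself carry the content of the lemma. The subsequent appeal to simple transitivity and $\mathcal{J}$-simplicity is not an argument: simple transitivity, by the definition in Subsection~\ref{s0-1.3}, gives you exactly one tool, namely that every $\cC_A$-invariant ideal of $\mathbf{M}(\mathtt{i})$ is zero or everything; you never construct such an ideal, never verify its invariance or properness, and the asserted conclusion that ``$\mathrm{F}\,X$ is a direct summand of a projective object'' never identifies which projective object (the only candidate in sight, $\mathrm{F}(\mathrm{F}\,X)$, is not known to be projective). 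The missing idea is precisely the one the proof in \cite{MM5} supplies: one produces a concrete $\cC_A$-stable ideal of $\mathbf{M}(\mathtt{i})$ out of the morphisms $\overline{\mathbf{M}}(\mathrm{F})(\varphi)$, $\mathrm{F}\in\mathcal{J}$, uses maximality of the two-sided cell $\mathcal{J}$ to check stability, and then invokes the zero-or-everything dichotomy to force the map $\mathrm{F}\,g$ to split. Without that (or some substitute), the passage from $(\mathrm{F}\,X)^{\oplus\dim A}\cong\mathrm{F}(\mathrm{F}\,X)$ to projectivity of $\mathrm{F}\,X$ is a genuine gap.
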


The following statement is proved in \cite[Lemma~13]{MM5}.

\begin{lemma}\label{lemmamm7}
Let $B$ be a finite dimensional $\Bbbk$-algebra and $\mathrm{G}$ an exact
endofunctor of $B$-mod. Assume that $\mathrm{G}$ sends each simple object of $B$-mod
to a projective object. Then $\mathrm{G}$ is a projective functor.
\end{lemma}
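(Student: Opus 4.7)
The plan is to invoke Eilenberg--Watts to realize $\mathrm{G}$ as tensoring with a $B$-$B$-bimodule $M$, and then to show $M$ itself is a projective bimodule by writing down an explicit isomorphism to a candidate projective bimodule determined by the action on the simples. Concretely, set $M := \mathrm{G}(B)$; the right $B$-action on $B$ by multiplication produces, via $\mathrm{G}$, a right $B$-action on $M$, making it a $B$-$B$-bimodule. The canonical natural transformation $M \otimes_B X \to \mathrm{G}(X)$ is an isomorphism for $X = B$, hence for any free $X$, and hence for every $X$ by the right-exactness of both functors; this gives $\mathrm{G} \simeq M \otimes_B -$. Exactness of $\mathrm{G}$ forces $M$ to be right-flat, and since $M$ is finitely generated over the finite-dimensional algebra $B$, it is right-projective.

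Next, write $\mathrm{G}(L_i) = M \otimes_B L_i \cong \bigoplus_j (Be_j)^{c_{ji}}$, where $L_1,\dots,L_n$ are the simples associated to a primitive decomposition $1 = e_1 + \cdots + e_n$, and define the projective bimodule
\[
N := \bigoplus_{i,j}\bigl(Be_j\otimes_{\Bbbk} e_iB\bigr)^{c_{ji}}.
\]
The identity $e_iB\otimes_B L_k = \delta_{ik}\Bbbk$ immediately gives $N \otimes_B L_k \cong \bigoplus_j (Be_j)^{c_{jk}} \cong M \otimes_B L_k$ for every $k$. I would then construct a bimodule map $\pi\colon N \to M$: since a bimodule map $Be_j\otimes e_iB \to M$ is the same as an element of $e_jMe_i$, I would choose, for each $(j,i)$ and each $s = 1,\dots,c_{ji}$, an element $\alpha_{ji,s}\in e_jMe_i$ whose class in the quotient $e_jMe_i/(e_jJMe_i+e_jMJe_i)$ (with $J = \mathrm{rad}(B)$) extends to a basis. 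Such a choice exists because the dimension of this quotient equals $c_{ji}$, as one verifies by computing the top of the projective left module $M \otimes_B L_i = Me_i/MJe_i$ and reading off its $e_j$-component.

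To see that $\pi$ is an isomorphism, I tensor with a simple: the map $\pi\otimes_B L_i\colon N \otimes_B L_i \to M \otimes_B L_i$ is surjective (the chosen $\alpha_{ji,s}$ generate the top of $M\otimes_B L_i$) and source and target have equal finite dimension, so it is an isomorphism. Induction on length, using the five-lemma applied to short exact sequences $0 \to Y \to X \to L \to 0$ with $L$ simple, extends this to $\pi\otimes_B X$ for every finite-length $X$; taking $X = B$ and using $-\otimes_B B = \mathrm{id}$ shows $\pi$ is an isomorphism of left $B$-modules, hence of bimodules (the inverse of a bijective bimodule map is automatically a bimodule map). Therefore $\mathrm{G} \cong N \otimes_B -$ is a projective functor.

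The main subtlety I expect is verifying the dimension equality $\dim\bigl(e_jMe_i/(e_jJMe_i+e_jMJe_i)\bigr) = c_{ji}$, which is precisely what confirms $N$ is the correct candidate bimodule; this uses both hypotheses essentially, since exactness is needed to identify $M \otimes_B L_i$ with $Me_i/MJe_i$, and projectivity of $M \otimes_B L_i$ is needed to identify its top with $\bigoplus_j L_j^{c_{ji}}$.
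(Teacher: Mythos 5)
Your proof is correct; note, however, that the paper itself does not prove this lemma but quotes it from \cite[Lemma~13]{MM5}, so there is no in-paper argument to compare against, and what you have written is a self-contained proof of the cited result. Your route --- Eilenberg--Watts over a finite dimensional algebra to get $\mathrm{G}\cong M\otimes_B-$ with $M=\mathrm{G}(B)$, right-flatness (hence right-projectivity) of $M$ from exactness, the dimension count $\dim\bigl(e_jMe_i/(e_jJMe_i+e_jMJe_i)\bigr)=c_{ji}$ obtained by computing the top of the projective module $M\otimes_BL_i\cong Me_i/MJe_i$ in two ways, and the resulting bimodule map $\pi\colon N\to M$, surjective on each $-\otimes_BL_i$ by Nakayama, hence bijective there by equality of dimensions, hence bijective on $B$ itself by the five-lemma induction on length --- is sound, and every step checks out. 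Three minor remarks. First, the identification $M\otimes_BL_i\cong Me_i/MJe_i$ needs only right-exactness; flatness of $M$ is genuinely used only to make the bottom row exact in the five-lemma step. Second, the computation $e_iB\otimes_BL_k\cong\delta_{ik}\Bbbk$ presumes $B$ basic and $\Bbbk$ algebraically closed; both are harmless in the paper's setting (one may pass to the basic algebra, and $\Bbbk$ is assumed algebraically closed throughout), but deserve a word. Third, you implicitly use $\Bbbk$-linearity of $\mathrm{G}$ so that the two induced $\Bbbk$-actions on $\mathrm{G}(B)$ agree and $M$ is a $B$-$B$-bimodule over $\Bbbk$; this is automatic for the functors arising in the paper.
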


\section{Existence of a projective-injective module guarantees exactness of the action}\label{s51}

\subsection{Exactness of the action of some projective functors}\label{s51.1}

Let $\mathbf{M}$ be a simple transitive $2$-representation of $\cC_A$.
Consider its abelianization $\overline{\mathbf{M}}$. For $\overline{\mathbf{M}}(\mathtt{i})$,
let $L_1$, $L_2$,\dots, $L_k$ be a complete and irredundant list of representatives of
isomorphism classes of simple objects. For $i\in\{1,2,\dots,k\}$,  denote by $P_i$ the
indecomposable projective cover of $L_i$ and by $I_i$ the indecomposable injective envelope of $L_i$.

\begin{lemma}\label{lem71-n}
Let $Q$ be a finite dimensional $\Bbbk$-algebra and $\mathrm{K}$ a right exact endofunctor of
$Q$-mod. Then the following conditions are equivalent:
\begin{enumerate}[$($a$)$]
\item\label{lem71-n.1} The functor $\mathrm{K}$ sends projective objects to projective objects.
\item\label{lem71-n.2} The right adjoint $\mathrm{K}'$ of $\mathrm{K}$ is exact.
\end{enumerate}
\end{lemma}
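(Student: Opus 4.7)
The plan is to exploit the adjunction $\mathrm{Hom}_Q(\mathrm{K}(-),-)\cong\mathrm{Hom}_Q(-,\mathrm{K}'(-))$ together with the fact that $Q$ (as a regular module) is a projective generator of $Q$-mod, so that surjectivity of a map $f\colon Y\to Z$ is witnessed by $\mathrm{Hom}_Q(Q,f)$, while projectivity of an object $X$ is witnessed by exactness of $\mathrm{Hom}_Q(X,-)$. Since $\mathrm{K}'$ is a right adjoint, it is automatically left exact, so in both directions the only thing at stake is the interaction with short exact sequences of the ``right'' end, reducing the equivalence to a single symmetric computation.

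For the implication (\ref{lem71-n.1})$\Rightarrow$(\ref{lem71-n.2}), I would take a short exact sequence $0\to X\to Y\to Z\to 0$ in $Q$-mod and already know that applying $\mathrm{K}'$ gives exactness except possibly on the right. To see $\mathrm{K}'(Y)\to \mathrm{K}'(Z)$ is surjective, I would apply $\mathrm{Hom}_Q(Q,-)$ (which just returns the underlying $Q$-module) and use adjunction to rewrite this map as $\mathrm{Hom}_Q(\mathrm{K}(Q),Y)\to \mathrm{Hom}_Q(\mathrm{K}(Q),Z)$. By hypothesis (\ref{lem71-n.1}), $\mathrm{K}(Q)$ is projective, so this map is surjective, and hence so is $\mathrm{K}'(Y)\to \mathrm{K}'(Z)$.

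For the reverse implication (\ref{lem71-n.2})$\Rightarrow$(\ref{lem71-n.1}), I would check projectivity of $\mathrm{K}(P)$ for any projective $P$ by showing that $\mathrm{Hom}_Q(\mathrm{K}(P),-)$ is exact. Adjunction rewrites this functor as $\mathrm{Hom}_Q(P,\mathrm{K}'(-))$, which is the composition of the exact functor $\mathrm{K}'$ (by hypothesis) with the exact functor $\mathrm{Hom}_Q(P,-)$ (because $P$ is projective); hence it is exact.

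There is no real obstacle here, since the adjunction machinery does all the work and the two implications are essentially the same computation read in opposite directions. The only point that might require an extra word is justifying that surjectivity of $\mathrm{K}'(Y)\to\mathrm{K}'(Z)$ as $Q$-modules is equivalent to surjectivity after applying $\mathrm{Hom}_Q(Q,-)$; this is simply the identification $\mathrm{Hom}_Q(Q,M)\cong M$ natural in $M$, which identifies the two maps.
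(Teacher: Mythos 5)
Your proof is correct and follows essentially the same route as the paper: both directions rest on the adjunction isomorphism $\mathrm{Hom}_Q(\mathrm{K}P,{}_-)\cong\mathrm{Hom}_Q(P,\mathrm{K}'{}_-)$ evaluated at a projective generator, with exactness transferred back and forth across it. Your version is marginally more explicit (invoking automatic left exactness of the right adjoint and the identification $\mathrm{Hom}_Q(Q,M)\cong M$), but this is the same argument as in the paper.
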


\begin{proof}
By adjunction, for a projective generator $P\in Q$-mod, we have a natural isomorphism
\begin{equation}\label{eq71e-n}
\mathrm{Hom}_Q(\mathrm{K}P,{}_-)\cong  \mathrm{Hom}_Q(P,\mathrm{K}'{}_-).
\end{equation}
If $\mathrm{K}P$ is projective, the left hand side of \eqref{eq71e-n} is exact. Hence the right hand
side is also exact. As $P$ is a projective generator, the functor $\mathrm{Hom}_Q(P,{}_-)$ detects
any non-zero homology. This forces $\mathrm{K}'$ to be exact. Therefore
\eqref{lem71-n.1} implies \eqref{lem71-n.2}.

Conversely, assume that $\mathrm{K}'$ is exact. Then the right hand side of  \eqref{eq71e-n} is exact.
Hence the left hand side is exact. This means that $\mathrm{K}P$ is projective. Therefore
\eqref{lem71-n.2} implies \eqref{lem71-n.1}. The claim follows.
\end{proof}

\begin{lemma}\label{lem72-n}
Assume that there exist $s,t\in\{1,2,\dots,n\}$ such that the left $A$-modules $Ae_s$ and
$\mathrm{Hom}_{\Bbbk}(e_tA,\Bbbk)$ are isomorphic. Then, for any $i\in\{1,2,\dots,n\}$, the pair
$(\mathrm{F}_{it},\mathrm{F}_{si})$ is a pair of adjoint  $1$-morphisms.
\end{lemma}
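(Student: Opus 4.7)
The plan is to identify the bimodule representing the right adjoint of $\mathrm{F}_{it}$ explicitly and then use the hypothesis $Ae_s\cong\mathrm{Hom}_{\Bbbk}(e_tA,\Bbbk)$ to rewrite that bimodule as the one representing $\mathrm{F}_{si}$. Once the two functors are known to form an adjoint pair in the ordinary categorical sense, the required $2$-morphisms $\alpha$ and $\beta$ are just the counit and unit of that adjunction, and the two triangle identities in the statement are built into the definition of an adjunction.

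First, since $\mathrm{F}_{it}$ is tensoring with the $A$-$A$-bimodule $Ae_i\otimes_{\Bbbk}e_tA$, its right adjoint is $\mathrm{Hom}_A(Ae_i\otimes_{\Bbbk}e_tA,{}_-)$ by the tensor–hom adjunction. As a left $A$-module, $Ae_i\otimes_{\Bbbk}e_tA$ is a finite direct sum of copies of $Ae_i$ indexed by a basis of $e_tA$, so for any $N\in A$-mod one has a natural isomorphism
\begin{displaymath}
\mathrm{Hom}_A(Ae_i\otimes_{\Bbbk}e_tA,N)\;\cong\;\mathrm{Hom}_{\Bbbk}(e_tA,\mathrm{Hom}_A(Ae_i,N))\;\cong\;\mathrm{Hom}_{\Bbbk}(e_tA,\Bbbk)\otimes_{\Bbbk}e_iN,
\end{displaymath}
where the left $A$-module structure on $\mathrm{Hom}_{\Bbbk}(e_tA,\Bbbk)$ comes from the right $A$-action on $e_tA$.

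Next, I would invoke the hypothesis to substitute $Ae_s$ for $\mathrm{Hom}_{\Bbbk}(e_tA,\Bbbk)$ as left $A$-modules, obtaining a natural isomorphism of the right adjoint with $Ae_s\otimes_{\Bbbk}e_i({}_-)\cong(Ae_s\otimes_{\Bbbk}e_iA)\otimes_A{}_-=\mathrm{F}_{si}$. This realizes $\mathrm{F}_{si}$ as a right adjoint of $\mathrm{F}_{it}$, and one defines $\alpha$ and $\beta$ to be the counit and unit of this adjunction (transported along the chosen representatives in the isomorphism classes of $\mathrm{F}_{0}$, $\mathrm{F}_{it}$ and $\mathrm{F}_{si}$).

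The only point requiring a bit of care—and thus the mildest of obstacles—is matching the right $A$-action on $\mathrm{Hom}_{\Bbbk}(e_tA,\Bbbk)$ (coming from the bimodule structure of $Ae_i\otimes_{\Bbbk}e_tA$) with the one on $Ae_s$ (coming from the hypothesised isomorphism of left $A$-modules and the bimodule structure of $Ae_s\otimes_{\Bbbk}e_iA$). Since the right action in both cases is induced solely by choosing a representative of the isomorphism class, one can simply transport the bimodule structure along the fixed isomorphism; no compatibility beyond a left $A$-module isomorphism is required to set up the adjunction, and the triangle identities for the counit–unit pair then follow automatically. This completes the plan.
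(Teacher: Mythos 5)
Your proposal is correct and follows essentially the same route as the paper: identify the right adjoint of $\mathrm{F}_{it}$ as $\mathrm{Hom}_A(Ae_i\otimes e_tA,{}_-)$, recognize it as tensoring with $\mathrm{Hom}_{\Bbbk}(e_tA,\Bbbk)\otimes e_iA$ (the paper cites \cite[Subsection~7.3]{MM1} for this computation, which you carry out directly), and substitute $Ae_s$ for $\mathrm{Hom}_{\Bbbk}(e_tA,\Bbbk)$ using the hypothesis to identify the adjoint with $\mathrm{F}_{si}$. The only cosmetic remark is that your worry about the right $A$-action is moot: in the external tensor product $\mathrm{Hom}_{\Bbbk}(e_tA,\Bbbk)\otimes e_iA$ the right action lives entirely on the factor $e_iA$, so a left-module isomorphism $Ae_s\cong\mathrm{Hom}_{\Bbbk}(e_tA,\Bbbk)$ immediately yields the needed bimodule isomorphism.
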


\begin{proof}
The functor
$\mathrm{F}_{it}$ is given by tensoring with the $A$-$A$-bimodule $Ae_i\otimes e_tA$. The
right adjoint of this functor is thus the functor $\mathrm{Hom}_A(Ae_i\otimes e_tA,{}_-)$.
By the computation in \cite[Subsection~7.3]{MM1}, the exact
functor $\mathrm{Hom}_A(Ae_i\otimes e_tA,{}_-)$ is isomorphic to the functor of tensoring
with the $A$-$A$-bimodule
\begin{displaymath}
\mathrm{Hom}_{\Bbbk}(e_tA,\Bbbk)\otimes e_iA.
\end{displaymath}
The injective $A$-module  $I_t\cong \mathrm{Hom}_{\Bbbk}(e_tA,\Bbbk)$ is isomorphic to the projective
$A$-mo\-dule $Ae_s$, by assumption. Therefore $\mathrm{Hom}_{\Bbbk}(e_tA,\Bbbk)\otimes e_iA$ is isomorphic to
$Ae_s\otimes e_iA$. This means that $\mathrm{F}_{si}$ is isomorphic to the right adjoint of
$\mathrm{F}_{it}$. The claim follows.
\end{proof}

\begin{corollary}\label{cor73-n}
Assume that there exist $s,t\in\{1,2,\dots,n\}$ such that the left $A$-mo\-dules $Ae_s$ and
$\mathrm{Hom}_{\Bbbk}(e_tA,\Bbbk)$ are isomorphic. Then, for any $i\in\{1,2,\dots,n\}$ and any
$2$-representation $\mathbf{N}$ of $\cC_A$, the pair
$(\mathbf{N}(\mathrm{F}_{it}),\mathbf{N}(\mathrm{F}_{si}))$
is a pair of adjoint functors.
\end{corollary}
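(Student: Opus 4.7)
The plan is to reduce the statement immediately to Lemma~\ref{lem72-n} by invoking the general principle that adjunctions inside a $2$-category, being defined purely diagrammatically through a unit, a counit, and the two triangle identities, are preserved by any strict $2$-functor.

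First, I would invoke Lemma~\ref{lem72-n} to obtain explicit $2$-morphisms
\begin{displaymath}
\alpha:\mathrm{F}_{it}\mathrm{F}_{si}\to\mathrm{F}_0
\quad\text{ and }\quad
\beta:\mathrm{F}_0\to\mathrm{F}_{si}\mathrm{F}_{it}
\end{displaymath}
in $\cC_A$ satisfying the two triangle identities spelled out in Subsection~\ref{s0-1.4}. Next, I would apply the (strict) $2$-representation $\mathbf{N}$ to these data, producing natural transformations $\mathbf{N}(\alpha)$ and $\mathbf{N}(\beta)$ between the corresponding composites of the functors $\mathbf{N}(\mathrm{F}_{it})$, $\mathbf{N}(\mathrm{F}_{si})$ and $\mathbf{N}(\mathrm{F}_0)$. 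Because $\mathbf{N}$ strictly preserves horizontal and vertical composition of $2$-morphisms and strictly preserves identity $2$-morphisms, each of the triangle identities satisfied by $(\alpha,\beta)$ transports verbatim into the analogous triangle identity satisfied by $(\mathbf{N}(\alpha),\mathbf{N}(\beta))$.

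Finally, I would account for the fact that $\mathrm{F}_0$ is only fixed as a representative of the isomorphism class of $1$-morphisms given by tensoring with ${}_AA_A$, hence $\mathbf{N}(\mathrm{F}_0)$ is only canonically isomorphic (rather than equal) to the identity endofunctor of $\mathbf{N}(\mathtt{i})$. Conjugating $\mathbf{N}(\alpha)$ and $\mathbf{N}(\beta)$ by any chosen such isomorphism yields an honest unit and counit exhibiting $\mathbf{N}(\mathrm{F}_{it})$ as left adjoint to $\mathbf{N}(\mathrm{F}_{si})$. This is entirely a formality of the $2$-categorical framework, so I do not expect any genuine obstacle; the only point requiring care is the bookkeeping of the identification of $\mathbf{N}(\mathrm{F}_0)$ with the identity functor, which is routine.
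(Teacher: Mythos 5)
Your proposal is correct and follows exactly the paper's route: the paper's entire proof is ``This follows directly from Lemma~\ref{lem72-n} and definitions,'' and your argument simply spells out what ``the definitions'' means, namely that a strict $2$-representation transports the unit, counit and triangle identities, modulo the routine identification of $\mathbf{N}(\mathrm{F}_0)$ with the identity functor.
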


\begin{proof}
This  follows directly from Lemma~\ref{lem72-n} and definitions.
\end{proof}

\begin{corollary}\label{cor74-n}
Assume that there exist $s,t\in\{1,2,\dots,n\}$ such that the left $A$-mo\-dules $Ae_s$ and
$\mathrm{Hom}_{\Bbbk}(e_tA,\Bbbk)$ are isomorphic. Then, for any $i\in\{1,2,\dots,n\}$ and any
finitary $2$-representation $\mathbf{N}$ of $\cC_A$, the functor
$\overline{\mathbf{N}}(\mathrm{F}_{si})$ is exact.
\end{corollary}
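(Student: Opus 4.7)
The plan is to derive the claim by applying Corollary~\ref{cor73-n} to the $2$-representation $\overline{\mathbf{N}}$ itself. Since $\overline{\mathbf{N}}$ is an object of $\cC_A\text{-mod}$ and hence a $2$-representation of $\cC_A$, Corollary~\ref{cor73-n} applies with $\overline{\mathbf{N}}$ in place of $\mathbf{N}$, giving that the pair $\bigl(\overline{\mathbf{N}}(\mathrm{F}_{it}),\,\overline{\mathbf{N}}(\mathrm{F}_{si})\bigr)$ is a pair of adjoint functors on $\overline{\mathbf{N}}(\mathtt{i})$.

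As a right adjoint, $\overline{\mathbf{N}}(\mathrm{F}_{si})$ preserves all limits; in particular, it is left exact. On the other hand, by the construction of the diagrammatic abelianization $2$-functor from \cite[Subsection~4.2]{MM2}, $\overline{\mathbf{N}}(\mathrm{F}_{si})$ is also right exact. Combining these two facts yields that $\overline{\mathbf{N}}(\mathrm{F}_{si})$ is exact, as required.

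The one step requiring care is the initial application of Corollary~\ref{cor73-n} to $\overline{\mathbf{N}}$, since the latter lives in $\cC_A\text{-mod}$ rather than in $\cC_A\text{-afmod}$. The resolution is that the proof of Corollary~\ref{cor73-n} only uses that a $2$-functorial action of $\cC_A$ transports the adjunction of Lemma~\ref{lem72-n} inside $\cC_A$ to an adjunction on the acted-upon category; this argument is formal and goes through verbatim for any $2$-representation of $\cC_A$, regardless of whether the underlying category is additive-finite or a module category of a finite-dimensional algebra. Hence the main obstacle is essentially just this bookkeeping check.
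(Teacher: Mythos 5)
Your proof is correct, but it reaches exactness by a slightly different route than the paper. The paper's proof combines Corollary~\ref{cor73-n} with Lemma~\ref{lem71-n}: the left adjoint $\overline{\mathbf{N}}(\mathrm{F}_{it})$ is right exact and sends projectives to projectives (the projectives of $\overline{\mathbf{N}}(\mathtt{i})$ being, up to summands, the objects coming from $\mathbf{N}(\mathtt{i})$, which the action preserves), whence its right adjoint $\overline{\mathbf{N}}(\mathrm{F}_{si})$ is exact by the equivalence \eqref{lem71-n.1}$\Leftrightarrow$\eqref{lem71-n.2}. You instead bypass Lemma~\ref{lem71-n} entirely: left exactness comes for free from $\overline{\mathbf{N}}(\mathrm{F}_{si})$ being a right adjoint, and right exactness from the fact that the diagrammatic abelianization acts by right exact functors. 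Both arguments are short and correct; yours is arguably more economical here since it uses only general nonsense about adjoints plus one structural fact about the abelianization, while the paper's version has the advantage that Lemma~\ref{lem71-n} is needed elsewhere anyway (e.g.\ it feeds into Lemma~\ref{lem74-n} and Proposition~\ref{prop77-n}), so no machinery is wasted. Your closing ``bookkeeping'' worry is in fact vacuous: Corollary~\ref{cor73-n} is stated for \emph{any} $2$-representation of $\cC_A$, not only finitary ones, so it applies to $\overline{\mathbf{N}}$ verbatim; the only thing genuinely worth noting there is that the adjunction $2$-morphisms of Lemma~\ref{lem72-n} and their triangle identities are transported by any $2$-functorial action, which you do say.
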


\begin{proof}
This  follows from the definitions by combining Lemma~\ref{lem71-n} and Corollary~\ref{cor73-n}.
\end{proof}

\subsection{Auxiliary lemma}\label{s51.2}

\begin{lemma}\label{lem74-n}
Let $Q$ be a finite dimensional $\Bbbk$-algebra and $\mathrm{K}$, $\mathrm{H}$ and $\mathrm{G}$ be two
endofunctor of $Q$-mod. Assume that:
\begin{enumerate}[$($a$)$]
\item\label{lem74-n.1} $\mathrm{H}$ is a projective functor;
\item\label{lem74-n.2} $\mathrm{K}$ is right exact;
\item\label{lem74-n.3} $\mathrm{K}$ sends projective objects to projective objects;
\item\label{lem74-n.4} $\mathrm{K}\circ \mathrm{H}\cong \mathrm{G}$.
\end{enumerate}
Then $\mathrm{G}$ is a projective functor.
\end{lemma}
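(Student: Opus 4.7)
The plan is to verify that $\mathrm{G}$ satisfies the hypotheses of Lemma~\ref{lemmamm7}, namely that $\mathrm{G}$ is exact and sends simple modules to projectives.

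The key observation, used throughout, is that a projective functor on $Q$-mod sends every $Q$-module---not merely every projective $Q$-module---to a projective one. Indeed, by definition such a functor is, up to isomorphism, a direct sum of functors of the form $Qe\otimes_{\Bbbk}fQ\otimes_{Q}(-)$, and for any $M\in Q$-mod the module $Qe\otimes_{\Bbbk}fQ\otimes_{Q}M\cong Qe\otimes_{\Bbbk}fM$ is a direct sum of $\dim_{\Bbbk}(fM)$ copies of the projective module $Qe$.

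First we would establish the exactness of $\mathrm{G}$. For any short exact sequence $0\to M'\to M\to M''\to 0$ in $Q$-mod, applying the exact projective functor $\mathrm{H}$ produces a short exact sequence $0\to \mathrm{H}(M')\to \mathrm{H}(M)\to \mathrm{H}(M'')\to 0$ of projective $Q$-modules, and such a sequence automatically splits. Since $\mathrm{K}$ is additive, it preserves split short exact sequences, so the induced sequence $0\to \mathrm{K}\mathrm{H}(M')\to \mathrm{K}\mathrm{H}(M)\to \mathrm{K}\mathrm{H}(M'')\to 0$ is again split short exact, and therefore $\mathrm{G}\cong \mathrm{K}\circ \mathrm{H}$ is exact. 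This is the step where the mere right exactness of $\mathrm{K}$ would not suffice on its own; the projectivity of $\mathrm{H}(M)$ for every $M$ is precisely what upgrades right exactness to full exactness after composition.

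Next we would check the action on simples. For any simple $Q$-module $L$, the module $\mathrm{H}(L)$ is projective by the key observation, and so by hypothesis \ref{lem74-n.3} on $\mathrm{K}$, the module $\mathrm{G}(L)\cong \mathrm{K}(\mathrm{H}(L))$ is projective. Lemma~\ref{lemmamm7} then applies and yields that $\mathrm{G}$ is a projective functor. No serious obstacle is foreseen; the splitting observation in the exactness step is the heart of the matter, and once it is in place all remaining verifications are immediate.
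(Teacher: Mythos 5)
Your proof is correct, but it takes a genuinely different route from the paper's. You reduce the statement to Lemma~\ref{lemmamm7}: you first note that a projective functor sends \emph{every} module (not just projectives) to a projective, then obtain exactness of $\mathrm{K}\circ\mathrm{H}$ by the splitting trick (a short exact sequence of projectives splits, and the additive functor $\mathrm{K}$ preserves split exactness), and finally observe that $\mathrm{G}(L)\cong\mathrm{K}(\mathrm{H}(L))$ is projective for each simple $L$ by hypothesis \eqref{lem74-n.3}. All of these steps are sound. The paper instead argues directly on bimodules: by the Eilenberg--Watts description, the right exact functor $\mathrm{K}$ is tensoring with some bimodule $V$, the projective functor $\mathrm{H}$ is tensoring with $X\otimes Y$ for $X$ a projective left module and $Y$ a projective right module, and the associativity isomorphism $V\otimes_Q(X\otimes Y)\cong(V\otimes_Q X)\otimes Y$ together with hypothesis \eqref{lem74-n.3} (which makes $V\otimes_Q X\cong\mathrm{K}(X)$ projective) exhibits the representing bimodule of $\mathrm{G}$ as projective. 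The paper's computation is self-contained (modulo the standard bimodule description of right exact functors) and identifies the representing bimodule explicitly, whereas yours outsources the final recognition step to the imported Lemma~\ref{lemmamm7}; in exchange, your argument makes no use of the bimodule realization of $\mathrm{K}$ at all and isolates the pleasant splitting observation that composing any additive functor after a functor with projective image is automatically exact.
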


\begin{proof}
By assumption~\eqref{lem74-n.1}, the functor $\mathrm{H}$ is given by tensoring with the $Q$-$Q$-bimodule
$X\otimes Y$, for some projective left $Q$-module $X$ and some projective right $Q$-module $Y$.
By assumption~\eqref{lem74-n.2}, $\mathrm{K}$ is given by tensoring with some $Q$-$Q$-bimodule
$V$. Using assumption~\eqref{lem74-n.4}, the $Q$-$Q$-bimodule that
determines the functor $\mathrm{G}$ is given by
\begin{equation}\label{eq74nn}
V\otimes_Q \left(X\otimes Y\right)\cong \left(V\otimes_Q X\right)\otimes Y.
\end{equation}
By assumption~\eqref{lem74-n.3}, $V\otimes_Q X$ is a projective left $Q$-module. This implies
that \eqref{eq74nn} is a projective $Q$-$Q$-bimodule and hence $\mathrm{G}$ is a projective functor.
\end{proof}

\subsection{Exactness of the action}\label{s51.3}

\begin{proposition}\label{prop77-n}
Assume that there exist $s,t\in\{1,2,\dots,n\}$ such that the left $A$-mo\-du\-les $Ae_s$ and
$\mathrm{Hom}_{\Bbbk}(e_tA,\Bbbk)$ are isomorphic. Let $\mathbf{M}$ be a simple
transitive $2$-representation of $\cC_A$. Then the functor
$\overline{\mathbf{M}}(\mathrm{F})$ is exact.
\end{proposition}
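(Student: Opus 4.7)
The plan is to show that each summand $\overline{\mathbf{M}}(\mathrm{F}_{ij})$ of $\overline{\mathbf{M}}(\mathrm{F})=\bigoplus_{i,j}\overline{\mathbf{M}}(\mathrm{F}_{ij})$ is a projective endofunctor of $\overline{\mathbf{M}}(\mathtt{i})$, viewed as $Q$-mod for some finite dimensional $\Bbbk$-algebra $Q$. This is enough, because a projective functor is tensoring with a bimodule of the form $X\otimes_{\Bbbk}Y$ with $Y$ a projective (hence flat) right $Q$-module, so such a functor is automatically exact.

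First I would prove that $\overline{\mathbf{M}}(\mathrm{F}_{si})$ is a projective functor for every $i\in\{1,\dots,n\}$. Exactness is provided by Corollary~\ref{cor74-n}. For any simple object $L$ in $\overline{\mathbf{M}}(\mathtt{i})$, Lemma~\ref{lemmamm5} gives that $\overline{\mathbf{M}}(\mathrm{F})\,L$ is projective, and since $\mathrm{F}_{si}$ is a direct summand of $\mathrm{F}$, so is $\overline{\mathbf{M}}(\mathrm{F}_{si})\,L$. Applying Lemma~\ref{lemmamm7} to $\overline{\mathbf{M}}(\mathrm{F}_{si})$ (exact and sending simples to projectives) then yields the claim.

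The next step is to bootstrap from $s$-indexed to arbitrary indices using \eqref{eqnn1}, which gives
\[
\mathrm{F}_{is}\circ \mathrm{F}_{sj}\cong \mathrm{F}_{ij}^{\oplus \dim(e_sAe_s)}.
\]
Applying $\overline{\mathbf{M}}$ and invoking Lemma~\ref{lem74-n} with $\mathrm{K}=\overline{\mathbf{M}}(\mathrm{F}_{is})$ (which is right exact and sends projectives to projectives by general properties of the abelianization construction) and $\mathrm{H}=\overline{\mathbf{M}}(\mathrm{F}_{sj})$ (a projective functor, by the preceding paragraph), one concludes that $\overline{\mathbf{M}}(\mathrm{F}_{ij})^{\oplus \dim(e_sAe_s)}$ is a projective functor. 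Since $e_s\in e_sAe_s$ forces $\dim(e_sAe_s)\geq 1$, Krull--Schmidt applied to decompositions into the indecomposable projective bimodules $Qe_a\otimes_{\Bbbk}e_bQ$ permits descent to a single factor, so $\overline{\mathbf{M}}(\mathrm{F}_{ij})$ itself is a projective functor, and in particular exact. Summing over $(i,j)$ gives exactness of $\overline{\mathbf{M}}(\mathrm{F})$.

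The main obstacle I expect is this final descent step: extracting $\overline{\mathbf{M}}(\mathrm{F}_{ij})$ as a summand of the projective functor $\overline{\mathbf{M}}(\mathrm{F}_{ij})^{\oplus d}$ requires invoking Krull--Schmidt uniqueness for bimodule decompositions, which is available because the indecomposable projective bimodules involved have local endomorphism rings. A secondary point requiring care is the verification that $\overline{\mathbf{M}}(\mathrm{F}_{is})$ meets the hypotheses of Lemma~\ref{lem74-n}; both the right-exactness and the preservation of projectives are standard outputs of the abelianization $\overline{\,\cdot\,}$ defined in \cite{MM2}.
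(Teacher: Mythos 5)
Your proposal is correct and follows essentially the same route as the paper: first establish that each $\overline{\mathbf{M}}(\mathrm{F}_{si})$ is a projective functor via Corollary~\ref{cor74-n}, Lemma~\ref{lemmamm5} and Lemma~\ref{lemmamm7}, then propagate to all $\overline{\mathbf{M}}(\mathrm{F}_{ij})$ by composing with $\mathrm{F}_{is}$, invoking Lemma~\ref{lem74-n} and formula~\eqref{eqnn1}, and splitting off a direct summand of $\overline{\mathbf{M}}(\mathrm{F}_{ij})^{\oplus\dim(e_sAe_s)}$. The paper phrases the final descent simply as ``additivity'' where you invoke Krull--Schmidt for projective bimodules, but this is the same point.
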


\begin{proof}
Let $B$ be a finite dimensional algebra such that $\overline{\mathbf{M}}(\mathtt{i})$
is equivalent to $B$-mod.

For $i\in\{1,2,\dots,n\}$, consider the $1$-morphism $\mathrm{F}_{si}$. By
Corollary~\ref{cor74-n}, the functor $\overline{\mathbf{M}}(\mathrm{F}_{si})$ is exact.
By Lemma~\ref{lemmamm5}, $\overline{\mathbf{M}}(\mathrm{F}_{si})$ sends any object in
$\overline{\mathbf{M}}(\mathtt{i})$ to a projective object. Therefore, by
Lemma~\ref{lemmamm7}, $\overline{\mathbf{M}}(\mathrm{F}_{si})$ is a projective endofunctor
of $B$-mod.

Now, for any $j\in \{1,2,\dots,n\}$, we have
\begin{displaymath}
\mathrm{F}_{js}\circ \mathrm{F}_{si}\cong \mathrm{F}_{ji}^{\oplus k},
\end{displaymath}
where $k=\dim(e_sAe_s)>0$. Therefore $\overline{\mathbf{M}}(\mathrm{F}_{ji}^{\oplus k})$ is a projective functor
for $B$-mod by Lemma~\ref{lem74-n}. By additivity,
$\overline{\mathbf{M}}(\mathrm{F}_{ji})$ is a projective functor
for $B$-mod  as well. In particular, $\overline{\mathbf{M}}(\mathrm{F}_{ji})$ is exact.
The claim follows.
\end{proof}

\section{The algebra $\Bbbk(\bullet\to\bullet)$}\label{s2}

Let $\Bbbk$ be an algebraically closed field. Denote by $A$ the path algebra, over $\Bbbk$, of the quiver
$1\overset{\alpha}{\longrightarrow}2$. The algebra $A$ has basis $e_1$, $e_2$ and $\alpha$
and the multiplication table $(x,y)\mapsto x\cdot y$ is given by:
\begin{displaymath}
\begin{array}{c||c|c|c}
x\backslash y & e_1 & e_2 & \alpha\\
\hline\hline
e_1 & e_1 & 0 & 0\\
\hline
e_2 & 0 & e_2 & \alpha\\
\hline
\alpha & \alpha & 0 & 0\\
\end{array}
\end{displaymath}
Note that $e_1Ae_2=0$ as $A$ contains no paths from $2$ to $1$. Note also that the left $A$-modules
$Ae_1$ and $\mathrm{Hom}_{\Bbbk}(e_2A,\Bbbk)$ are isomorphic.

Let $\mathcal{C}$ be a small category equivalent to $A$-mod. Consider the corresponding finitary
$2$-category $\cC_A$.
Up to isomorphism, indecomposable $1$-morphisms in $\cC_A$ are $\mathrm{F}_0$ and
$\mathrm{F}_{ij}$, where  $i,j=1,2$.
Note that formula~\eqref{eqnn0} for $A$ reads $\mathrm{F}\circ\mathrm{F}=\mathrm{F}^{\oplus 3}$.
Using \eqref{eqnn1}, the table of compositions for the functors $\mathrm{F}_{ij}$ (up to isomorphism) is as follows:
\begin{equation}\label{eq1}
\begin{array}{c||c|c|c|c}
\circ & \mathrm{F}_{11} & \mathrm{F}_{12} & \mathrm{F}_{21}& \mathrm{F}_{22}\\
\hline\hline
\mathrm{F}_{11} & \mathrm{F}_{11} & \mathrm{F}_{12} & 0& 0\\
\hline
\mathrm{F}_{12} & \mathrm{F}_{11} & \mathrm{F}_{12} & \mathrm{F}_{11}& \mathrm{F}_{12}\\
\hline
\mathrm{F}_{21} & \mathrm{F}_{21} & \mathrm{F}_{22} & 0& 0\\
\hline
\mathrm{F}_{22} & \mathrm{F}_{21} & \mathrm{F}_{22} & \mathrm{F}_{21}& \mathrm{F}_{22}\\
\end{array}
\end{equation}
Set $\mathcal{J}_0:=\{\mathrm{F}_{0}\}$ and $\mathcal{J}:=\{\mathrm{F}_{ij}\,:\,i,j=1,2\}$.
Note that the $2$-category $\cC_A$ is not weakly fiat in the sense of \cite{MM2,MM6} as
the algebra $A$ is not self-injective.

As $\cC_A$ is $\mathcal{J}$-simple and $A$ has trivial center, the only proper non-zero
quotient of  $\cC_A$ contains just the identity $1$-morphism (up to isomorphism) and its scalar
endomorphisms (cf. \cite{MM3}). Therefore this quotient is fiat with strongly regular $\mathcal{J}$-classes and hence
it has a unique, up to equivalence, simple transitive $2$-representation, namely $\mathbf{C}_{\mathcal{L}_0}$,
where $\mathcal{L}_0=\mathcal{J}_0$, see \cite[Theorem~18]{MM5}. This means that, in order to prove
Theorem~\ref{mainresult} for $A$, it is enough to consider {\em faithful} $2$-representations of $\cC_A$.

From the formula
\begin{equation}\label{eq10n}
\mathrm{Hom}_{A\text{-}A}(Ae_i\otimes e_jA,Ae_s\otimes e_tA)\cong e_iAe_s\otimes e_tAe_j,
\end{equation}
for all $i,j,s,t\in\{1,2\}$, we get the following table of
$\mathrm{Hom}_{\ccC_{\hspace{-1mm}A}(\mathtt{i})}(X,Y)$ (up to isomorphism), where $X$ and $Y$ are
indecomposable $1$-morphisms as specified in the table:
\begin{equation}\label{eq101}
\begin{array}{c||c|c|c|c}
X\setminus Y & \mathrm{F}_{11} & \mathrm{F}_{12} & \mathrm{F}_{21}& \mathrm{F}_{22}\\
\hline\hline
\mathrm{F}_{11} & \Bbbk & \Bbbk & 0& 0\\
\hline
\mathrm{F}_{12} & 0 & \Bbbk & 0 & 0\\
\hline
\mathrm{F}_{21} & \Bbbk & \Bbbk & \Bbbk& \Bbbk\\
\hline
\mathrm{F}_{22} & 0 & \Bbbk & 0& \Bbbk\\
\end{array}
\end{equation}

\section{Integer matrices for $\Bbbk(\bullet\to\bullet)$}\label{s1}

\subsection{Integer matrices satisfying $M^2=3M$}\label{s1.1}

In this section we classify all square matrices $M$ with positive integer coefficients which satisfy $M^2=3M$.

\begin{proposition}\label{prop1}
Let $M$ be a $k\times k$ matrix, for some $k$, with positive integer coefficients, satisfying $M^2=3M$. Then
$M$ is one of the following matrices:
\begin{gather*}
M_1:=\left(3\right),\quad
M_2:=\left(\begin{array}{cc}2&1\\2&1\end{array}\right),\quad
M_3:=\left(\begin{array}{cc}2&2\\1&1\end{array}\right),\quad
M_4:=\left(\begin{array}{cc}1&1\\2&2\end{array}\right),\\
M_5:=\left(\begin{array}{cc}1&2\\1&2\end{array}\right),\quad
M_6:=\left(\begin{array}{ccc}1&1&1\\1&1&1\\1&1&1\end{array}\right).
\end{gather*}
\end{proposition}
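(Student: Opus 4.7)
The plan is to identify the spectral structure of $M$ first, then exploit positivity to pin down its entries.

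Step one: From $M^2 = 3M$ the minimal polynomial of $M$ divides $x(x-3)$, so $M$ is diagonalizable with all eigenvalues in $\{0,3\}$. Since the entries of $M$ are (strictly) positive, $M$ is a positive matrix and the Perron--Frobenius theorem guarantees that its spectral radius is a \emph{simple} eigenvalue. As $M \neq 0$ the spectral radius must be $3$, so $3$ has algebraic (hence geometric) multiplicity one and all remaining eigenvalues vanish. Consequently $M$ has rank $1$.

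Step two: Write $M = u v^{T}$ for nonzero vectors $u,v \in \mathbb{R}^{k}$; since $M_{ij} = u_i v_j > 0$ we may take $u_i, v_j > 0$. The relation $M^{2} = 3M$ becomes $(v^{T}u)\, u v^{T} = 3 u v^{T}$, equivalently
\begin{equation*}
\sum_{i=1}^{k} u_i v_i \;=\; 3.
\end{equation*}
Using the rescaling freedom $u \mapsto \lambda u$, $v \mapsto \lambda^{-1} v$, we may normalize one coordinate (say $u_1 = 1$), after which the constraints $u_i v_j \in \mathbb{Z}_{>0}$ force all $u_i$ and $v_j$ to be positive rationals. Moreover, each diagonal product $u_i v_i$ is a positive integer $\geq 1$, so the equation above gives $k \leq 3$.

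Step three: I carry out the case analysis. If $k=1$ then $M = (3) = M_1$. If $k=3$, then $u_i v_i = 1$ for every $i$, so $u_i = 1/v_i$; the requirement that $u_i v_j = v_j/v_i$ be a positive integer for all $i,j$ forces $v_i = v_j$, and then all entries equal $1$, giving $M_6$. If $k=2$, the multiset $\{u_1 v_1, u_2 v_2\}$ is $\{1,2\}$; normalizing the index where the diagonal entry is $1$ to have $u_i = v_i = 1$, the remaining index $j$ must satisfy $u_j v_j = 2$ with $u_j, v_j$ positive integers, producing the two possibilities $(u_j, v_j) \in \{(1,2),(2,1)\}$. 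Combined with the choice of which diagonal entry equals $1$, this yields exactly the four matrices $M_2$, $M_3$, $M_4$, $M_5$.

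The only mildly delicate point is the Perron--Frobenius step (alternatively one can argue that $\tfrac{1}{3}M$ is an idempotent matrix with positive entries, whose range is necessarily one-dimensional because any two positive vectors proportional modulo $\ker(\tfrac{1}{3}M - I)$ must be positively proportional); once rank one is established, the rest is a short and essentially mechanical enumeration of factorizations $3 = \sum u_i v_i$ in $\mathbb{Z}_{>0}$ compatible with all entries $u_i v_j$ being positive integers.
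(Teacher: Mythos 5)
Your proof is correct and follows essentially the same route as the paper: diagonalizability from $x^2-3x$, Perron--Frobenius to get that $3$ is a simple eigenvalue, hence rank one and trace three, hence $k\leq 3$, followed by a case analysis. The only cosmetic difference is that you enumerate the cases via an explicit rank-one factorization $M=uv^{T}$, where the paper instead uses the vanishing of all $2\times 2$ minors; both yield the same list.
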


\begin{proof}
Clearly, we have $M_i^2=3M_i$, for each $i=1,2,3,4,5,6$. So, we need to show that no other square matrix with
positive integer coefficients satisfies $M^2=3M$.

Let $M$ be a $k\times k$ matrix, for some $k$, with positive integer coefficients satisfying $M^2=3M$.
Then $M$ is diagonalizable (as $x^2-3x$ has no multiple roots) and the only possible eigenvalues
for $M$ are $0$ and $3$.  From the Perron-Frobenius theorem it follows
that the Perron-Frobenius eigenvalue $3$ must have multiplicity one. Therefore $M$ has rank one and trace three.
As all entries in $M$ are positive integers, we get $k\leq 3$.

If $k=1$, then, clearly,  $M=M_1$.

If $k=3$, then all diagonal entries in $M$ are $1$. As all $2\times 2$ minors in $M$ should have determinant
zero and positive integer entries, it follows that all entries in $M$ are $1$ and thus $M=M_6$.

If $k=2$, then the two diagonal entries in $M$ are $1$ and $2$. As the determinant of $M$ is zero, the
two remaining entries are also $1$ and $2$. Therefore $M=M_i$, for some $i\in\{2,3,4,5\}$.
\end{proof}

\subsection{The matrix $[\mathrm{F}]$ for a faithful simple transitive $2$-representation}\label{s1.2}

Let $\mathbf{M}$ be a finitary, simple, transitive and faithful $2$-representation of $\cC_A$.
Let $M:=[\mathrm{F}]$ be the matrix of $\mathbf{M}(\mathrm{F})$ and, for $i,j=1,2$, let
$M_{ij}:=[\mathrm{F}_{ij}]$ be the matrix of $\mathbf{M}(\mathrm{F}_{ij})$. Note that
$M=M_{11}+M_{12}+M_{21}+M_{22}$.

The symmetric group $S_k$ acts on $\mathrm{Mat}_{k\times k}(\mathbb{Z})$ by conjugation
with permutation matrices. This action corresponds to permutation of basis elements,
whenever the matrix on which we act represents an endomorphism of some free $\mathbb{Z}$-module.
We will call this action the {\em permutation action}.

\begin{proposition}\label{prop2}
In order to respect the multiplication rule \eqref{eq1}, up to the permutation action,
we have the following three possibilities:
\begin{enumerate}[$($a$)$]
\item\label{prop2.1}
$M=M_2$ and
\begin{gather*}
M_{11}=\left(\begin{array}{cc}1&0\\ 0&0\end{array}\right),\,\,
M_{12}=\left(\begin{array}{cc}1&1\\ 0&0\end{array}\right),\,\,
M_{21}=\left(\begin{array}{cc}0&0\\ 1&0\end{array}\right),\,\,
M_{22}=\left(\begin{array}{cc}0&0\\ 1&1\end{array}\right).
\end{gather*}
\item\label{prop2.2}
$M=M_3$ and
\begin{gather*}
M_{11}=\left(\begin{array}{cc}0&1\\ 0&1\end{array}\right),\,\,
M_{12}=\left(\begin{array}{cc}1&0\\ 1&0\end{array}\right),\,\,
M_{21}=\left(\begin{array}{cc}0&1\\ 0&0\end{array}\right),\,\,
M_{22}=\left(\begin{array}{cc}1&0\\ 0&0\end{array}\right).
\end{gather*}
\item\label{prop2.3}
$M=M_6$ and
\begin{gather*}
M_{11}=\left(\begin{array}{ccc}1&0&0\\ 1&0&0\\ 0&0&0\end{array}\right),\,\,
M_{12}=\left(\begin{array}{ccc}0&1&1\\ 0&1&1\\ 0&0&0\end{array}\right),\\
M_{21}=\left(\begin{array}{ccc}0&0&0\\ 0&0&0\\ 1&0&0\end{array}\right),\,\,
M_{22}=\left(\begin{array}{ccc}0&0&0\\ 0&0&0\\ 0&1&1\end{array}\right).
\end{gather*}
\end{enumerate}
\end{proposition}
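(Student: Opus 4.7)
The plan is to combine Proposition~\ref{prop1} with the matrix identities that the composition table~\eqref{eq1} imposes on the individual $M_{ij}$, namely
\begin{gather*}
M_{11}^2=M_{11},\quad M_{12}^2=M_{12},\quad M_{22}^2=M_{22},\quad M_{21}^2=0,\\
M_{11}M_{12}=M_{12},\quad M_{12}M_{11}=M_{11},\quad M_{12}M_{21}=M_{11},\quad M_{21}M_{12}=M_{22},\\
M_{22}M_{11}=M_{21},\quad M_{11}M_{22}=0,
\end{gather*}
together with their analogues. A preliminary reduction: the permutation action of $S_2$ identifies $M_3$ with $M_4$ and $M_2$ with $M_5$, so it suffices to consider $M\in\{M_1,M_2,M_3,M_6\}$. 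The case $M=M_1=(3)$ is then ruled out by faithfulness: $\mathcal{J}$-simplicity of $\cC_A$ forces $M_{ij}\geq 1$ for every $(i,j)$, but then $M_{11}+M_{12}+M_{21}+M_{22}\geq 4>3$.

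For the three surviving candidates I would extract a uniform rank-and-trace argument that eliminates most of the remaining freedom. The relations $M_{11}M_{12}=M_{12}$ and $M_{12}M_{11}=M_{11}$ give $\mathrm{Im}(M_{11})=\mathrm{Im}(M_{12})$; analogously $\mathrm{Im}(M_{21})=\mathrm{Im}(M_{22})$, while the row spaces pair as $M_{11},M_{21}$ versus $M_{12},M_{22}$. Hence all four $M_{ij}$ have a common rank $r$. The three idempotents $M_{11},M_{12},M_{22}$ have trace $r$, whereas $M_{21}$ is nilpotent of trace $0$, so $\mathrm{tr}(M)=3r$; as $\mathrm{tr}(M_i)=3$ for $i\in\{2,3,6\}$ we conclude $r=1$.

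With every $M_{ij}$ of rank one, the coincidences of column and row spaces let me write $M_{ij}=x_iy_j^T$ for non-negative integer vectors $x_1,x_2,y_1,y_2$. The orthogonality $y_1^Tx_2=0$ (extracted from $M_{11}M_{22}=0$) together with the pairings $y_j^Tx_i=1$ for $(i,j)\in\{(1,1),(1,2),(2,2)\}$ forces $\{x_1,x_2\}$ and $\{y_1,y_2\}$ to be linearly independent, so the four tensor products $x_iy_j^T$ are linearly independent and the additive constraint $\sum M_{ij}=M$ normalises all scalar coefficients to $1$. Setting $X=x_1+x_2$ and $Y=y_1+y_2$ recovers the primitive rank-one factorization $M=XY^T$, and the classification reduces to a finite enumeration of non-negative integer splittings $(X,Y)=(x_1+x_2,\,y_1+y_2)$ satisfying the four pairing conditions above; carrying this out for $M_2$, $M_3$, and $M_6$ yields precisely the decompositions (a), (b), (c), uniquely up to the permutation action.

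The main technical obstacle lies in the case $M=M_6$, where $X=Y=(1,1,1)^T$ and a priori the three coordinates admit several placements of the supports of $x_2$ and $y_1$. The constraint $y_1^Tx_2=0$ forces those two supports to be disjoint, and combining this with the normalisation pairings and the identity $x_1+x_2=y_1+y_2=(1,1,1)^T$ forces the $2{+}1$ partition of coordinates visible in~(c). The cases $M_2$ and $M_3$ are then handled by the same method but are essentially immediate, since the two-coordinate setting leaves almost no freedom beyond the permutation of rows.
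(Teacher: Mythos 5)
Your argument is correct, but it takes a genuinely different and more structural route than the paper. The paper proves Proposition~\ref{prop2} by direct entry-chasing: it observes that the idempotents $M_{11},M_{12},M_{22}$ must have non-zero diagonal while the nilpotent $M_{21}$ has zero diagonal, that $M_{11}$ and $M_{22}$ cannot share a diagonal position because $M_{11}M_{22}=0$, and then propagates the remaining relations of \eqref{eq1} through the unknown entries case by case (the $M_6$ case in particular takes several successive refinement steps). You instead extract a rank-and-trace argument (common rank $r$ for all four $M_{ij}$ from the equalities of column and row spaces, hence $3=\mathrm{tr}(M)=3r$ and $r=1$) and then encode everything in a factorization $M_{ij}=x_iy_j^{T}$ governed by four pairing conditions $y_1^{T}x_1=y_2^{T}x_1=y_2^{T}x_2=1$, $y_1^{T}x_2=0$; the enumeration of splittings of $X=x_1+x_2$, $Y=y_1+y_2$ is then essentially immediate and does reproduce (a), (b), (c) uniquely up to permutation. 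This buys a shorter, more conceptual proof that also verifies all of \eqref{eq1} at once (every product $M_{ij}M_{st}$ is determined by the pairings), and it is closer in spirit to the Flor-type structure the paper only invokes later in Subsection~\ref{s12.2}. One point to tighten: the normalisation of the scalars in $M_{ij}=c_{ij}x_iy_j^{T}$ to $c_{ij}=1$ does not really come from the additive constraint $\sum_{i,j}M_{ij}=M$ as you state (that would presuppose knowing the expansion of $M$ in the basis $x_iy_j^{T}$); rather, choosing $x_i,y_j$ to be the primitive non-negative integer generators of the relevant column and row spaces, idempotency gives $c_{ij}\,(y_j^{T}x_i)=1$ and hence $c_{ij}=1$ together with the pairing value for $(i,j)\in\{(1,1),(1,2),(2,2)\}$, and then $M_{21}M_{12}=M_{22}$ forces $c_{21}=1$. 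With that repair the argument is complete.
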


\begin{proof}
As $\mathbf{M}$ is simple, transitive and faithful, we get that  $M$ has positive integer entries.
As $\mathrm{F}\circ\mathrm{F}=\mathrm{F}^{\oplus 3}$, we have $M=M_i$ for some $i\in\{1,2,3,4,5,6\}$,
by Proposition~\ref{prop1}. As $M$ is the sum of four non-zero matrices
(corresponding to all $\mathrm{F}_{ij}$) each of which has non-negative integer entries, we have
$M\neq M_1$. The case $M=M_4$ reduces to the case $M=M_3$ by swapping the basis elements.
The case $M=M_5$ reduces to the case $M=M_2$ by swapping the basis elements. It is easy to check
that the cases \eqref{prop2.1}, \eqref{prop2.2} and \eqref{prop2.3} listed in the formulation
satisfy \eqref{eq1}.

{\bf Assume $M=M_2$.} Note, from \eqref{eq1}, that $\mathrm{F}_{11}$, $\mathrm{F}_{12}$ and $\mathrm{F}_{22}$
are idempotent, while $\mathrm{F}_{21}$ is nilpotent. Therefore $M_{11}$, $M_{12}$, $M_{22}$ must have
non-zero diagonals, while the diagonal for $M_{21}$ should be zero. From $M_{11}M_{22}=0$ it follows that
$M_{11}$ and $M_{22}$ cannot have common diagonal entries. In any case, this means that
$M_{12}$ has the non-zero diagonal entry in the left upper corner. Let us first assume the following:
\begin{displaymath}
M_{11}=\left(\begin{array}{cc}0&*\\ *&1\end{array}\right),\,\,
M_{12}=\left(\begin{array}{cc}1&*\\ *&0\end{array}\right),\,\,
M_{21}=\left(\begin{array}{cc}0&*\\ *&0\end{array}\right),\,\,
M_{22}=\left(\begin{array}{cc}1&*\\ *&0\end{array}\right).
\end{displaymath}
From $M_{11}M_{21}=0$, we get:
\begin{displaymath}
M_{11}=\left(\begin{array}{cc}0&0\\ 0&1\end{array}\right),\,\,
M_{12}=\left(\begin{array}{cc}1&0\\ *&0\end{array}\right),\,\,
M_{21}=\left(\begin{array}{cc}0&1\\ 0&0\end{array}\right),\,\,
M_{22}=\left(\begin{array}{cc}1&0\\ *&0\end{array}\right).
\end{displaymath}
This, however, contradicts $M_{11}M_{12}=M_{12}$. Now assume
\begin{displaymath}
M_{11}=\left(\begin{array}{cc}1&*\\ *&0\end{array}\right),\,\,
M_{12}=\left(\begin{array}{cc}1&*\\ *&0\end{array}\right),\,\,
M_{21}=\left(\begin{array}{cc}0&*\\ *&0\end{array}\right),\,\,
M_{22}=\left(\begin{array}{cc}0&*\\ *&1\end{array}\right).
\end{displaymath}
From $M_{11}M_{21}=M_{11}M_{22}=0$, we get:
\begin{displaymath}
M_{11}=\left(\begin{array}{cc}1&0\\ *&0\end{array}\right),\,\,
M_{12}=\left(\begin{array}{cc}1&1\\ *&0\end{array}\right),\,\,
M_{21}=\left(\begin{array}{cc}0&0\\ *&0\end{array}\right),\,\,
M_{22}=\left(\begin{array}{cc}0&0\\ *&1\end{array}\right).
\end{displaymath}
From $M_{12}M_{22}=M_{12}$, we get:
\begin{displaymath}
M_{11}=\left(\begin{array}{cc}1&0\\ 0&0\end{array}\right),\,\,
M_{12}=\left(\begin{array}{cc}1&1\\ 0&0\end{array}\right),\,\,
M_{21}=\left(\begin{array}{cc}0&0\\ 1&0\end{array}\right),\,\,
M_{22}=\left(\begin{array}{cc}0&0\\ 1&1\end{array}\right).
\end{displaymath}

{\bf Assume $M=M_3$.} Note, from \eqref{eq1}, that $\mathrm{F}_{11}$, $\mathrm{F}_{12}$ and $\mathrm{F}_{22}$
are idempotent, while $\mathrm{F}_{21}$ is nilpotent. Therefore $M_{11}$, $M_{12}$, $M_{22}$ must have
non-zero diagonals, while the diagonal for $M_{21}$ should be zero. From $M_{11}M_{22}=0$ it follows that
$M_{11}$ and $M_{22}$ cannot have common diagonal entries. In any case this means that
$M_{12}$ has the non-zero diagonal entry in the left upper corner. Let us first assume the following:
\begin{displaymath}
M_{11}=\left(\begin{array}{cc}1&*\\ *&0\end{array}\right),\,\,
M_{12}=\left(\begin{array}{cc}1&*\\ *&0\end{array}\right),\,\,
M_{21}=\left(\begin{array}{cc}0&*\\ *&0\end{array}\right),\,\,
M_{22}=\left(\begin{array}{cc}0&*\\ *&1\end{array}\right).
\end{displaymath}
From $M_{11}M_{21}=0$, we get:
\begin{displaymath}
M_{11}=\left(\begin{array}{cc}1&0\\ 0&0\end{array}\right),\,\,
M_{12}=\left(\begin{array}{cc}1&*\\ 0&0\end{array}\right),\,\,
M_{21}=\left(\begin{array}{cc}0&0\\ 1&0\end{array}\right),\,\,
M_{22}=\left(\begin{array}{cc}0&*\\ 0&1\end{array}\right).
\end{displaymath}
This, however, contradicts $M_{21}M_{12}=M_{22}$. Now assume the following:
\begin{displaymath}
M_{11}=\left(\begin{array}{cc}0&*\\ *&1\end{array}\right),\,\,
M_{12}=\left(\begin{array}{cc}1&*\\ *&0\end{array}\right),\,\,
M_{21}=\left(\begin{array}{cc}0&*\\ *&0\end{array}\right),\,\,
M_{22}=\left(\begin{array}{cc}1&*\\ *&0\end{array}\right).
\end{displaymath}
From $M_{11}M_{21}=M_{11}M_{22}=0$, we get:
\begin{displaymath}
M_{11}=\left(\begin{array}{cc}0&*\\ 0&1\end{array}\right),\,\,
M_{12}=\left(\begin{array}{cc}1&*\\ 1&0\end{array}\right),\,\,
M_{21}=\left(\begin{array}{cc}0&*\\ 0&0\end{array}\right),\,\,
M_{22}=\left(\begin{array}{cc}1&*\\ 0&0\end{array}\right).
\end{displaymath}
From $M_{11}M_{12}=M_{12}$, we get:
\begin{displaymath}
M_{11}=\left(\begin{array}{cc}0&1\\ 0&1\end{array}\right),\,\,
M_{12}=\left(\begin{array}{cc}1&0\\ 1&0\end{array}\right),\,\,
M_{21}=\left(\begin{array}{cc}0&1\\ 0&0\end{array}\right),\,\,
M_{22}=\left(\begin{array}{cc}1&0\\ 0&0\end{array}\right).
\end{displaymath}

{\bf Assume $M=M_6$.} Note, from \eqref{eq1}, that $\mathrm{F}_{11}$, $\mathrm{F}_{12}$ and $\mathrm{F}_{22}$
are idempotent, while $\mathrm{F}_{21}$ is nilpotent. Therefore $M_{11}$, $M_{12}$, $M_{22}$ must have
non-zero diagonals, while the diagonal for $M_{21}$ should be zero. Therefore, up to permutation
of basis vectors, we may assume that
\begin{gather*}
M_{11}=\left(\begin{array}{ccc}1&*&*\\ *&0&*\\ *&*&0\end{array}\right),\,\,
M_{12}=\left(\begin{array}{ccc}0&*&*\\ *&1&*\\ *&*&0\end{array}\right),\\
M_{21}=\left(\begin{array}{ccc}0&*&*\\ *&0&*\\ *&*&0\end{array}\right),\,\,
M_{22}=\left(\begin{array}{ccc}0&*&*\\ *&0&*\\ *&*&1\end{array}\right).
\end{gather*}
From $M_{11}M_{21}=M_{11}M_{22}=0$ we thus get that the last column of $M_{11}$ must be zero
and the first row of both $M_{21}$ and $M_{22}$ must be zero. Since the $M_{ij}$'s add up to $M$,
the rightmost element in the first row of $M_{12}$ must be $1$:
\begin{gather*}
M_{11}=\left(\begin{array}{ccc}1&*&0\\ *&0&0\\ *&*&0\end{array}\right),\,\,
M_{12}=\left(\begin{array}{ccc}0&*&1\\ *&1&*\\ *&*&0\end{array}\right),\\
M_{21}=\left(\begin{array}{ccc}0&0&0\\ *&0&*\\ *&*&0\end{array}\right),\,\,
M_{22}=\left(\begin{array}{ccc}0&0&0\\ *&0&*\\ *&*&1\end{array}\right).
\end{gather*}
From $M_{11}M_{12}=M_{12}$ it follows that the second row of $M_{11}$ cannot be zero, which yields:
\begin{gather*}
M_{11}=\left(\begin{array}{ccc}1&*&0\\ 1&0&0\\ *&*&0\end{array}\right),\,\,
M_{12}=\left(\begin{array}{ccc}0&*&1\\ 0&1&*\\ *&*&0\end{array}\right),\\
M_{21}=\left(\begin{array}{ccc}0&0&0\\ 0&0&*\\ *&*&0\end{array}\right),\,\,
M_{22}=\left(\begin{array}{ccc}0&0&0\\ 0&0&*\\ *&*&1\end{array}\right).
\end{gather*}
Now $M_{11}M_{12}=M_{12}$ implies that the first and the second rows of $M_{12}$ should coincide,
moreover, the first element in the third row in $M_{12}$ should be zero and also
the third row in $M_{11}$ and thus also in $M_{12}$ must be zero:
\begin{gather*}
M_{11}=\left(\begin{array}{ccc}1&0&0\\ 1&0&0\\ 0&0&0\end{array}\right),\,\,
M_{12}=\left(\begin{array}{ccc}0&1&1\\ 0&1&1\\ 0&0&0\end{array}\right),\\
M_{21}=\left(\begin{array}{ccc}0&0&0\\ 0&0&0\\ *&*&0\end{array}\right),\,\,
M_{22}=\left(\begin{array}{ccc}0&0&0\\ 0&0&0\\ *&*&1\end{array}\right).
\end{gather*}
Now, $M_{21}M_{11}=M_{21}$ gives:
\begin{gather*}
M_{11}=\left(\begin{array}{ccc}1&0&0\\ 1&0&0\\ 0&0&0\end{array}\right),\,\,
M_{12}=\left(\begin{array}{ccc}0&1&1\\ 0&1&1\\ 0&0&0\end{array}\right),\\
M_{21}=\left(\begin{array}{ccc}0&0&0\\ 0&0&0\\ *&0&0\end{array}\right),\,\,
M_{22}=\left(\begin{array}{ccc}0&0&0\\ 0&0&0\\ *&1&1\end{array}\right).
\end{gather*}
Finally, $M_{12}M_{21}=M_{11}$ gives:
\begin{gather*}
M_{11}=\left(\begin{array}{ccc}1&0&0\\ 1&0&0\\ 0&0&0\end{array}\right),\,\,
M_{12}=\left(\begin{array}{ccc}0&1&1\\ 0&1&1\\ 0&0&0\end{array}\right),\\
M_{21}=\left(\begin{array}{ccc}0&0&0\\ 0&0&0\\ 1&0&0\end{array}\right),\,\,
M_{22}=\left(\begin{array}{ccc}0&0&0\\ 0&0&0\\ 0&1&1\end{array}\right).
\end{gather*}
\end{proof}

\section{Proof of Theorem~\ref{mainresult} for $\Bbbk(\bullet\to\bullet)$}\label{s7}

Let $\mathbf{M}$ be a simple  transitive $2$-representation of $\cC_A$.
Let $B$ be a basic finite dimensional algebra such that $\mathbf{M}(\mathtt{i})$
is equivalent to $B$-proj.

As the left $A$-modules $Ae_1$ and $\mathrm{Hom}_{\Bbbk}(e_2A,\Bbbk)$
are isomorphic, from Proposition~\ref{prop77-n} it follows that
the functor $\overline{\mathbf{M}}(\mathrm{F})$ is exact.
From Lemmata~\ref{lemmamm5} and \ref{lemmamm7} we thus obtain that
$\overline{\mathbf{M}}(\mathrm{F})$ is a projective endofunctor of $B$-mod.

{\bf Case 1.} Assume that $M=M_3$ and the $M_{ij}$'s are thus given by Proposition~\ref{prop2}\eqref{prop2.2}.
Let $\overline{\mathbf{M}}$ be the abelianization of $\mathbf{M}$. As usual, we write $P_1$ and $P_2$
for indecomposable projectives in $\mathbf{M}(\mathtt{i})$ and $L_1$ and $L_2$ for their respective simple tops.
Let $\epsilon_1$ and $\epsilon_2$ be the corresponding primitive idempotents in $B$. For $i,j=1,2$, denote by
$\mathrm{G}_{ij}$ an endofunctor of $\overline{\mathbf{M}}(\mathtt{i})$ which corresponds to tensoring
with $B\epsilon_i\otimes \epsilon_j B$.

From the form of $M_{21}$, we see that $\mathrm{F}_{21}$ acts via $\mathrm{G}_{12}$.
Similarly, $\mathrm{F}_{22}$ acts via $\mathrm{G}_{11}$.
From the matrices $M_{21}$ and $M_{22}$ it follows that
\begin{displaymath}
[P_1:L_1]=1,\quad
[P_1:L_2]=0,\quad
[P_2:L_1]=0,\quad
[P_2:L_2]=1.
\end{displaymath}
This means that $B\cong\Bbbk\oplus\Bbbk$. Therefore all $\mathrm{G}_{ij}$ are isomorphisms between the
corresponding $\Bbbk$-mod components. From the matrices $M_{12}$ and $M_{21}$ it thus follows directly that
there are no nonzero homomorphisms from $\mathrm{F}_{21}$ to $\mathrm{F}_{12}$. This contradicts \eqref{eq101} and
hence Case~1 cannot occur.

{\bf Case 2.} Assume that $M=M_2$ and the $M_{ij}$'s are thus given by Proposition~\ref{prop2}\eqref{prop2.1}.
Let $\overline{\mathbf{M}}$ be the abelianization of $\mathbf{M}$. As usual, we write $P_1$ and $P_2$
for indecomposable projectives in $\mathbf{M}(\mathtt{i})$ and $L_1$  and $L_2$ for their respective simple tops.
Let $\epsilon_1$  and $\epsilon_2$ be the corresponding primitive idempotents in $B$. For
$i,j=1,2$, denote by $\mathrm{G}_{ij}$ an endofunctor of $\overline{\mathbf{M}}(\mathtt{i})$
which corresponds to tensoring with $B\epsilon_i\otimes \epsilon_j B$.

From the form of $M_{11}$, we see that $\mathrm{F}_{11}$ acts via $\mathrm{G}_{11}$.
Similarly, $\mathrm{F}_{21}$ acts via $\mathrm{G}_{21}$.

From the form of $M_{12}$, we see that $\mathrm{F}_{12}$ acts either via $\mathrm{G}_{12}$
or via $\mathrm{G}_{11}$ or via $\mathrm{G}_{12}\oplus \mathrm{G}_{11}$. However, we already know
that the matrix of $\mathrm{G}_{11}$ is $M_{11}$. This leaves us with possibilities
$\mathrm{G}_{12}$ or $\mathrm{G}_{12}\oplus \mathrm{G}_{11}$ for $\mathrm{F}_{12}$.

Assume that $\mathrm{F}_{12}$ acts via $\mathrm{G}_{12}\oplus \mathrm{G}_{11}$.
We already know the matrix of $\mathrm{G}_{11}$, so the matrix of  $\mathrm{G}_{12}$ is
\begin{displaymath}
\left(\begin{array}{cc}0&1\\0&0\end{array}\right).
\end{displaymath}
This and the matrix $M_{11}$ imply that
\begin{displaymath}
\mathrm{G}_{11}\,P_1\cong P_1,\quad
\mathrm{G}_{11}\,P_2=0,\quad
\mathrm{G}_{12}\,P_1=0,\quad
\mathrm{G}_{12}\,P_2\cong P_2.
\end{displaymath}
Therefore
\begin{displaymath}
[P_1:L_1]=1,\quad
[P_1:L_2]=0,\quad
[P_2:L_1]=0,\quad
[P_2:L_2]=1
\end{displaymath}
and we have $B\cong\Bbbk\oplus\Bbbk$. This leads to the same contradiction as in Case~1 above.
Therefore $\mathrm{F}_{12}$ acts via $\mathrm{G}_{12}$. Similarly one shows that
$\mathrm{F}_{22}$ acts via $\mathrm{G}_{22}$.

From the matrices for all $\mathrm{G}_{ij}$'s it follows that the Cartan matrices of $A$ and $B$
coincide which implies that $A$ and $B$ are isomorphic (that is special for our case, but the algebra
$A$ is very small, so this claim is clear). Furthermore, all $\mathrm{F}_{ij}$'s act via the
corresponding $\mathrm{G}_{ij}$. It now
follows by the usual arguments, see \cite[Proposition~9]{MM5}, that
$\overline{\mathbf{M}}$ is equivalent to a cell $2$-representation of $\cC_A$.

{\bf Case 3.} Assume that $M=M_6$ and the $M_{ij}$'s are thus given by Proposition~\ref{prop2}\eqref{prop2.3}.
Let $\overline{\mathbf{M}}$ be the abelianization of $\mathbf{M}$. As usual, we write $P_1$, $P_2$ and $P_3$
for indecomposable projectives in $\mathbf{M}(\mathtt{i})$ and $L_1$, $L_2$ and $L_3$ for their respective simple tops.
Let $\epsilon_1$, $\epsilon_2$ and $\epsilon_3$ be the corresponding primitive idempotents in $B$. For
$i,j=1,2,3$, denote by $\mathrm{G}_{ij}$ an endofunctor of $\overline{\mathbf{M}}(\mathtt{i})$
which corresponds to tensoring with $B\epsilon_i\otimes \epsilon_j B$.

From the form of $M_{21}$, we see that $\mathrm{F}_{21}$ acts via $\mathrm{G}_{31}$.
From the form of $M_{11}$, we see that $\mathrm{F}_{11}$ acts via $\mathrm{G}_{11}\oplus \mathrm{G}_{21}$.
This implies
\begin{displaymath}
[P_1:L_1]=1,\quad  [P_2:L_1]=[P_3:L_1]=0.
\end{displaymath}

From the form of $M_{22}$, we see that $\mathrm{F}_{22}$ acts either  via $\mathrm{G}_{32}$
or via $\mathrm{G}_{33}$ or via $\mathrm{G}_{32}\oplus \mathrm{G}_{33}$.
In the latter case, we have that
the matrices of $\mathrm{G}_{32}$ and $\mathrm{G}_{33}$ are, respectively:
\begin{displaymath}
\left(\begin{array}{ccc}0&0&0\\0&0&0\\0&1&0\end{array}\right)\quad \text{ and }\quad
\left(\begin{array}{ccc}0&0&0\\0&0&0\\0&0&1\end{array}\right).
\end{displaymath}
It follows that
\begin{displaymath}
[P_2:L_2]=1,\quad  [P_1:L_2]=[P_3:L_2]=0
\end{displaymath}
and
\begin{displaymath}
[P_3:L_3]=1,\quad  [P_1:L_3]=[P_2:L_3]=0.
\end{displaymath}
This implies that $B\cong\Bbbk\oplus\Bbbk\oplus\Bbbk$ and leads to a similar contradiction as in Case~1.

{\bf Subcase 3.1.} Assume that $\mathrm{F}_{22}$ acts   via $\mathrm{G}_{32}$. This implies
\begin{displaymath}
[P_2:L_2]=[P_3:L_2]=1,\quad  [P_1:L_2]=0.
\end{displaymath}
From $[P_1:L_2]=0$ we have $\epsilon_2B\epsilon_1=0$. This means that
\begin{displaymath}
\mathrm{Hom}_{B\text{-}B}(B\epsilon_3\otimes \epsilon_1 B,B\epsilon_3\otimes \epsilon_2 B)=0,
\end{displaymath}
that is, $\mathrm{Hom}(\mathrm{G}_{31},\mathrm{G}_{32})=0$. This contradicts
$\mathrm{Hom}_{\ccC}(\mathrm{F}_{21},\mathrm{F}_{22})\neq 0$, see \eqref{eq101}.

{\bf Subcase 3.2.} Assume that $\mathrm{F}_{22}$ acts  via $\mathrm{G}_{33}$. This implies
\begin{displaymath}
[P_3:L_3]=[P_2:L_3]=1,\quad  [P_1:L_3]=0.
\end{displaymath}
From $[P_1:L_3]=0$ we have $\epsilon_3B\epsilon_1=0$. This means that
\begin{displaymath}
\mathrm{Hom}_{B\text{-}B}(B\epsilon_3\otimes \epsilon_1 B,B\epsilon_3\otimes \epsilon_3 B)=0,
\end{displaymath}
that is, $\mathrm{Hom}(\mathrm{G}_{31},\mathrm{G}_{33})=0$. This contradicts
$\mathrm{Hom}_{\ccC}(\mathrm{F}_{21},\mathrm{F}_{22})\neq 0$, see \eqref{eq101}.
The proof is now complete.

\section{The algebra $\Bbbk(\bullet\overset{\alpha}{\to} \bullet\overset{\beta}{\to}
\bullet)/(\beta\alpha)$}\label{s21}

Let $\Bbbk$ be an algebraically closed field. Denote by $A$ the path algebra, over $\Bbbk$, of the quiver
\begin{displaymath}
\Bbbk(\xymatrix{1\ar[r]^{\alpha}&2\ar[r]^{\beta}&3})\quad
\text{ modulo the relations }\quad\beta\alpha=0.
\end{displaymath}
The algebra $A$ has basis $e_1$, $e_2$, $e_3$, $\alpha$ and $\beta$
and the multiplication table $(x,y)\mapsto x\cdot y$ is given by:
\begin{displaymath}
\begin{array}{c||c|c|c|c|c}
x\backslash y & e_1 & e_2 & e_3 & \alpha & \beta\\
\hline\hline
e_1 & e_1 & 0 & 0& 0& 0\\
\hline
e_2 & 0 & e_2 & 0&  \alpha & 0\\
\hline
e_3 & 0 & 0 & e_3&  0 & \beta\\
\hline
\alpha & \alpha & 0 & 0& 0& 0\\
\hline
\beta & 0 & \beta & 0& 0& 0\\
\end{array}
\end{displaymath}
Note that $e_1Ae_2=0$, $e_1Ae_3=0$, $e_2Ae_3=0$ and $e_3Ae_1=0$.
Note also that the left $A$-modules $Ae_1$ and $\mathrm{Hom}_{\Bbbk}(e_2A,\Bbbk)$ are isomorphic
and the left $A$-modules $Ae_2$ and $\mathrm{Hom}_{\Bbbk}(e_3A,\Bbbk)$ are isomorphic.

Let $\mathcal{C}$ be a small category equivalent to $A$-mod. Consider the corresponding finitary
$2$-category $\cC_A$.
Up to isomorphism, indecomposable $1$-morphisms in $\cC_A$ are $\mathrm{F}_0$ and
$\mathrm{F}_{ij}$, where  $i,j=1,2,3$. Note that formula~\eqref{eqnn0} for $A$ reads
$\mathrm{F}\circ \mathrm{F}\cong \mathrm{F}^{\oplus 5}$. Using \eqref{eqnn1},
the table of compositions for the functors $\mathrm{F}_{ij}$ (up to isomorphism) is as follows:
\begin{equation}\label{eq1n}
\begin{array}{c||c|c|c|c|c|c|c|c|c}
\circ & \mathrm{F}_{11} & \mathrm{F}_{12} & \mathrm{F}_{13}&
\mathrm{F}_{21}& \mathrm{F}_{22}& \mathrm{F}_{23}&
\mathrm{F}_{31}& \mathrm{F}_{32}& \mathrm{F}_{33}\\
\hline\hline
\mathrm{F}_{11} & \mathrm{F}_{11} & \mathrm{F}_{12}& \mathrm{F}_{13} & 0& 0& 0& 0& 0& 0\\
\hline
\mathrm{F}_{12} & \mathrm{F}_{11} & \mathrm{F}_{12} & \mathrm{F}_{13}&
\mathrm{F}_{11} & \mathrm{F}_{12} & \mathrm{F}_{13}& 0&0&0\\
\hline
\mathrm{F}_{13} &0&0&0& \mathrm{F}_{11} & \mathrm{F}_{12} &\mathrm{F}_{13}
& \mathrm{F}_{11} & \mathrm{F}_{12} &\mathrm{F}_{13} \\
\hline
\mathrm{F}_{21} & \mathrm{F}_{21} & \mathrm{F}_{22}& \mathrm{F}_{23} & 0& 0& 0& 0& 0& 0\\
\hline
\mathrm{F}_{22} & \mathrm{F}_{21} & \mathrm{F}_{22} & \mathrm{F}_{23}&
\mathrm{F}_{21} & \mathrm{F}_{22} & \mathrm{F}_{23}& 0&0&0\\
\hline
\mathrm{F}_{23} &0&0&0& \mathrm{F}_{21} & \mathrm{F}_{22} &\mathrm{F}_{23}
& \mathrm{F}_{21} & \mathrm{F}_{22} &\mathrm{F}_{23} \\
\hline
\mathrm{F}_{31} & \mathrm{F}_{31} & \mathrm{F}_{32}& \mathrm{F}_{33} & 0& 0& 0& 0& 0& 0\\
\hline
\mathrm{F}_{32} & \mathrm{F}_{31} & \mathrm{F}_{32} & \mathrm{F}_{33}&
\mathrm{F}_{31} & \mathrm{F}_{32} & \mathrm{F}_{33}& 0&0&0\\
\hline
\mathrm{F}_{33} &0&0&0& \mathrm{F}_{31} & \mathrm{F}_{32} &\mathrm{F}_{33}
& \mathrm{F}_{31} & \mathrm{F}_{32} &\mathrm{F}_{33} \\
\end{array}
\end{equation}
Set $\mathcal{J}_0:=\{\mathrm{F}_{0}\}$ and $\mathcal{J}:=\{\mathrm{F}_{ij}\,:\,i,j=1,2,3\}$.
Note that the $2$-category $\cC_A$ is not weakly fiat in the sense of \cite{MM2,MM6} as
the algebra $A$ is not self-injective.

As $\cC_A$ is $\mathcal{J}$-simple and $A$ has trivial center, the only proper non-zero
quotient of  $\cC_A$ contains just the identity $1$-morphism (up to isomorphism) and its scalar
endomorphisms (cf. \cite{MM3}). Therefore this quotient is fiat with strongly regular $\mathcal{J}$-classes and hence
it has a unique, up to equivalence, simple transitive $2$-representation, namely $\mathbf{C}_{\mathcal{L}_0}$,
where $\mathcal{L}_0=\mathcal{J}_0$, see \cite[Theorem~18]{MM5}. This means that, in order to prove
Theorem~\ref{mainresult} for $A$, it is enough to consider {\em faithful} $2$-representations of $\cC_A$.

From \eqref{eq10n}, we get the following table of
$\mathrm{Hom}_{\ccC_{\hspace{-1mm}A}(\mathtt{i})}(X,Y)$ (up to isomorphism), where $X$ and $Y$ are
indecomposable $1$-morphisms as specified in the table:
\begin{equation}\label{eq101nn}
\begin{array}{c||c|c|c|c|c|c|c|c|c}
X\setminus Y & \mathrm{F}_{11} & \mathrm{F}_{12} & \mathrm{F}_{13}& \mathrm{F}_{21}& \mathrm{F}_{22}&
\mathrm{F}_{23}& \mathrm{F}_{31}& \mathrm{F}_{32}& \mathrm{F}_{33}\\
\hline\hline
\mathrm{F}_{11} & \Bbbk & \Bbbk & 0 & 0& 0& 0& 0& 0& 0\\
\hline
\mathrm{F}_{12} & 0& \Bbbk & \Bbbk& 0& 0& 0& 0& 0& 0\\
\hline
\mathrm{F}_{13} & 0 & 0 & \Bbbk& 0& 0& 0& 0& 0& 0\\
\hline
\mathrm{F}_{21} & \Bbbk & \Bbbk & 0& \Bbbk & \Bbbk & 0& 0& 0& 0\\
\hline
\mathrm{F}_{22} & 0 & \Bbbk & \Bbbk& 0& \Bbbk & \Bbbk & 0& 0& 0\\
\hline
\mathrm{F}_{23} & 0 & 0 & \Bbbk& 0& 0& \Bbbk & 0& 0& 0\\
\hline
\mathrm{F}_{31} & 0 & 0 & 0& \Bbbk& \Bbbk& 0& \Bbbk& \Bbbk& 0\\
\hline
\mathrm{F}_{32} & 0 & 0 & 0& 0&\Bbbk& \Bbbk&  0& \Bbbk& \Bbbk\\
\hline
\mathrm{F}_{33} & 0 & 0 & 0& 0&0& \Bbbk&  0& 0& \Bbbk
\end{array}
\end{equation}

\section{Integer matrices for $\Bbbk(\bullet\overset{\alpha}{\to}
\bullet\overset{\beta}{\to} \bullet)/(\beta\alpha)$}\label{s11}

\subsection{Integer matrices satisfying $M^2=5M$}\label{s11.1}

In this section we classify all square matrices $M$ with positive integer coefficients which satisfy $M^2=5M$.

\begin{proposition}\label{prop101}
Let $M$ be a $k\times k$ matrix, for some $k$, with positive integer coefficients, satisfying $M^2=5M$. Then,
up to permutation action, $M$ is one of the following matrices:
\begin{displaymath}
N_1:=\left(5\right),\quad
N_2:=\left(\begin{array}{cc}4&1\\4&1\end{array}\right),\quad
N_3:=\left(\begin{array}{cc}4&4\\1&1\end{array}\right),\quad
N_4:=\left(\begin{array}{cc}4&2\\2&1\end{array}\right),
\end{displaymath}
\begin{displaymath}
N_5:=\left(\begin{array}{cc}3&6\\1&2\end{array}\right),\quad
N_6:=\left(\begin{array}{cc}3&3\\2&2\end{array}\right),\quad
N_7:=\left(\begin{array}{cc}3&2\\3&2\end{array}\right),\quad
N_8:=\left(\begin{array}{cc}3&1\\6&2\end{array}\right),
\end{displaymath}
\begin{displaymath}
N_9:=\left(\begin{array}{ccc}3&1&1\\3&1&1\\3&1&1\end{array}\right),\quad
N_{10}:=\left(\begin{array}{ccc}3&3&3\\1&1&1\\1&1&1\end{array}\right),\quad
N_{11}:=\left(\begin{array}{ccc}2&2&2\\2&2&2\\1&1&1\end{array}\right),
\end{displaymath}
\begin{displaymath}
N_{12}:=\left(\begin{array}{ccc}2&4&2\\1&2&1\\1&2&1\end{array}\right),\quad
N_{13}:=\left(\begin{array}{ccc}2&2&1\\2&2&1\\2&2&1\end{array}\right),\quad
N_{14}:=\left(\begin{array}{ccccc}1&1&1&1&1\\1&1&1&1&1\\1&1&1&1&1\\1&1&1&1&1\\1&1&1&1&1\end{array}\right),
\end{displaymath}
\begin{displaymath}
N_{15}:=\left(\begin{array}{cccc}2&1&1&1\\2&1&1&1\\2&1&1&1\\2&1&1&1\end{array}\right),\quad
N_{16}:=\left(\begin{array}{cccc}2&2&2&2\\1&1&1&1\\1&1&1&1\\1&1&1&1\end{array}\right).
\end{displaymath}
\end{proposition}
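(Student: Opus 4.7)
The plan is to mimic the proof of Proposition~\ref{prop1} but with eigenvalue/trace $5$ in place of $3$; the structural setup is identical, and only the case analysis becomes substantially longer.

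\emph{Structural reduction.} From $M^2 = 5M$ we obtain $M(M - 5I) = 0$, and since $x^2 - 5x$ has distinct roots, $M$ is diagonalizable with spectrum contained in $\{0, 5\}$. Because every entry of $M$ is a positive integer, $M$ is a strictly positive matrix, so the Perron--Frobenius theorem guarantees that the spectral radius $5$ is a simple eigenvalue. Consequently $M$ has rank exactly one and trace exactly five.

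\emph{Parametrisation and enumeration.} A rank-one matrix with positive entries can be written as $M_{ij} = u_i v_j$; by rescaling we may assume $\gcd(u_1, \dots, u_k) = 1$, and then the integrality of $M$ forces $v = (v_1, \dots, v_k)$ also to consist of positive integers. The trace condition reads $\sum_{i=1}^{k} u_i v_i = 5$, and since each summand is at least $1$ this bounds $k \leq 5$. The $S_k$-action on $M$ by conjugation with permutation matrices corresponds to simultaneous permutation of the coordinates of $u$ and $v$, so the classification reduces to listing such pairs $(u, v)$ modulo this diagonal action. For each $k \in \{1, 2, 3, 4, 5\}$ I would enumerate all admissible primitive $u$ and positive-integer $v$ with $\sum u_i v_i = 5$; matching these against the list recovers exactly the sixteen matrices $N_1, \dots, N_{16}$, and conversely one checks directly that each $N_i$ satisfies $N_i^2 = 5 N_i$.

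\emph{Expected obstacle.} The main difficulty is purely combinatorial bookkeeping. Because the $S_k$-action is diagonal, a pair $(u, v)$ is in general \emph{not} equivalent to $(v, u)$; this is why pairs of transposed matrices such as $N_2, N_3$ or $N_5, N_8$ appear as separate entries in the list, so one must enumerate ordered pairs rather than unordered sets. The cases $k = 1, 4, 5$ are nearly immediate from the constraint $\sum u_i v_i = 5$, while the cases $k = 2$ and $k = 3$ require listing the relevant partitions of $5$ and, for each, the admissible primitive $u$ together with all integer $v$; this is routine, with the only real risk being over- or under-counting.
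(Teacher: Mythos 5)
Your structural reduction (diagonalizability from the separable polynomial $x^2-5x$, Perron--Frobenius to force the eigenvalue $5$ to be simple, hence rank one and trace five, hence $k\leq 5$) is exactly the paper's opening move for both Proposition~\ref{prop1} and Proposition~\ref{prop101}. Where you genuinely diverge is in the enumeration: the paper uses the permutation action to sort the diagonal, lists the possible diagonals for each $k$ (e.g.\ $(3,1,1)$ or $(2,2,1)$ for $k=3$), and then pins down the off-diagonal entries by hand via the vanishing of $2\times 2$ minors; you instead parametrise every rank-one positive integer matrix as an outer product $M=uv^{t}$ with $u$ primitive integral and $v$ forced integral, reducing everything to solving $\sum_i u_iv_i=5$ in positive integers modulo the diagonal $S_k$-action. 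Your route is more systematic: it treats all $k$ uniformly, makes the trace bound and the $k\leq 5$ cutoff transparent, and correctly explains why transposed pairs such as $N_2,N_3$ and $N_5,N_8$ occur separately (the action on $(u,v)$ is diagonal, so $(u,v)\not\sim(v,u)$ in general) --- a point the paper handles only implicitly by listing both. I checked that your enumeration does produce exactly the sixteen classes ($1+7+5+2+1$ for $k=1,\dots,5$). Two small things to tighten: (i) you should justify that $v$ is integral --- the columns of $M$ are positive integer multiples of one another, so $u$ may be taken to be the primitive integer vector proportional to a column, and then each $v_j$ is a positive rational whose denominator divides every $u_i$, hence equals $1$; (ii) the proposal only sketches the case-by-case listing rather than carrying it out, so as written it is a correct plan rather than a complete proof, but the missing work is routine.
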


We note an important difference with Proposition~\ref{prop1}: to make our list shorter,
Proposition~\ref{prop101} gives classification only up to permutation action.

\begin{proof}
Clearly, we have $N_i^2=5N_i$, for each $i=1,2,\dots,16$. So, we need to show that any other square matrix
with positive integer coefficients satisfying $M^2=5M$ can be reduced to one of the above using
permutation action.

Let $M$ be a $k\times k$ matrix, for some $k$, with positive integer coefficients satisfying $M^2=5M$.
Then $M$ is diagonalizable (as $x^2-5x$ has no multiple roots) and the only possible eigenvalues
for $M$ are $0$ and $5$.  From the Perron-Frobenius theorem it follows
that the Perron-Frobenius eigenvalue $5$ must have multiplicity one. Therefore $M$ has rank one and trace five.
As all entries in $M$ are positive integers, we get $k\leq 5$. Using the permutation action, we may
assume that the entries on the main diagonal of $M$ weakly decrease from the top left corner to the
bottom right corner.

If $k=1$, then, clearly,  $M=N_1$.

If $k=2$, then the diagonal of  $M$ is either $(4,1)$ or $(3,2)$. In the first case, as
the determinant of $M$ is zero, the two remaining entries are either $2$ and $2$ or $4$ and $1$.
This gives $M=N_2$, $M=N_3$ or $M=N_4$. In the second case, as  the determinant of $M$ is zero,
the two remaining entries are either $2$ and $3$ or $1$ and $6$.
This gives $M=N_5$, $M=N_6$, $M=N_7$ or $M=N_8$.

If $k=3$, then the diagonal of  $M$ is either $(3,1,1)$ or $(2,2,1)$. In the first case,
as  $M$ has rank one, any $2\times 2$ minor in $M$ has determinant zero. This means that all
entries which are neither in the first row nor in the first column are equal to $1$.
If the first row contains more than one entry different from $1$, then all entires
in this row are $3$ and we get $M=N_{10}$. If the first column contains more than one entry different
from $1$, then all  entires in this column are $3$ and we get $M=N_{9}$.

In the second case, write
\begin{displaymath}
M=\left(\begin{array}{ccc}2&m_{12}&m_{13}\\m_{21}&2&m_{23}\\m_{31}&m_{32}&1\end{array}\right).
\end{displaymath}
Then $m_{32}m_{23}=2$, $m_{31}m_{13}=2$ and $m_{21}m_{12}=4$. Hence, both $(m_{32},m_{23})$
and $(m_{31},m_{13})$ are in $\{(1,2),(2,1)\}$. We can choose them independently and the fact
that $M$ has rank one then uniquely determines the pair $(m_{21},m_{12})$. This gives us
$M=N_{11}$, $M=N_{12}$ and $M=N_{13}$ and also the possibility
\begin{displaymath}
M=N'_{12}:=\left(\begin{array}{ccc}2&1&1\\4&2&2\\2&1&1\end{array}\right).
\end{displaymath}
which reduces to $M=N_{12}$ by permutation action.

If $k=4$, then the diagonal of  $M$ is $(2,1,1,1)$. As  $M$ has rank one, any $2\times 2$ minor
in $M$ has determinant zero. This means that all entries which are neither in the first row nor
in the first column are equal to $1$. If the first row contains more than one entry different from $1$,
then all entires in this row are $2$ and we get $M=N_{16}$. If the first column contains more than
one entry different from $1$, then all  entires in this column are $2$ and we get $M=N_{15}$.

If $k=5$, then all diagonal entries in $M$ are $1$. As all $2\times 2$ minors in $M$ should have determinant
zero and positive integer entries, it follows that all entries in $M$ are $1$ and thus $M=N_{14}$.
\end{proof}

\subsection{Filtering  ``easy cases'' out}\label{s11.2}

Let $\mathbf{M}$ be a finitary, simple, transitive and faithful $2$-representation of $\cC_A$.
Let $M:=[\mathrm{F}]$ be the matrix of $\mathbf{M}(\mathrm{F})$ and, for $i,j=1,2,3$, let
$M_{ij}:=[\mathrm{F}_{ij}]$ be the matrix of $\mathbf{M}(\mathrm{F}_{ij})$. We have
$M_{ij}\neq 0$, for all $i,j=1,2,3$. By Proposition~\ref{prop101}, up to permutation action,
we have $M=N_i$, for some $i\in\{1,2,\dots,16\}$ as in Proposition~\ref{prop101}. Note that
trace of $M$ is five.

As usual, we call ``position  $(i,j)$'' the
intersection of the $i$-th row and  the $j$-th column of a matrix.

From now on, we assume that $\overline{\mathbf{M}}(\mathtt{i})$ is equivalent to $B$-mod,
for some basic algebra $B$. Let $L_1$, $L_2$,\dots, $L_k$ be a complete and
irredundant list of representatives of isomorphism classes of simple objects in
$\overline{\mathbf{M}}(\mathtt{i})$. For $i\in\{1,2,\dots,k\}$,  denote by $P_i$ the
indecomposable projective cover of $L_i$ and by $I_i$ the indecomposable injective envelope of $L_i$.
The matrices $M_{ij}$ are given with respect to this fixed ordering of isomorphism classes of
simple objects.

\begin{lemma}\label{lem201}
{\hspace{2mm}}

\begin{enumerate}[$($i$)$]
\item\label{lem201.1} All diagonal elements in $M_{13}$, $M_{21}$, $M_{31}$ and $M_{32}$ are zero.
\item\label{lem201.2} Each of the matrices $M_{11}$, $M_{12}$, $M_{22}$, $M_{23}$ and $M_{33}$,
has exactly one entry equal to $1$ on the diagonal and all other diagonal entries are zero.
\end{enumerate}
\end{lemma}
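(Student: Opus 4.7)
The plan is to read off (i) directly from the multiplication table \eqref{eq1n}, and for (ii) to reduce the statement to showing that the five relevant traces are all equal to $1$.

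For (i), I would apply the formula $\mathrm{F}_{ij}\circ\mathrm{F}_{ij}=\mathrm{F}_{ij}^{\oplus\dim e_jAe_i}$ with $(i,j)\in\{(1,3),(2,1),(3,1),(3,2)\}$: in each of these cases the relevant dimension is zero, so $\mathrm{F}_{ij}\circ\mathrm{F}_{ij}=0$. Thus $M_{ij}^2=0$, and since the entries of $M_{ij}$ are non-negative integers, the identity $(M_{ij}^2)_{aa}=\sum_k(M_{ij})_{ak}(M_{ij})_{ka}\ge(M_{ij})_{aa}^2$ forces every diagonal entry to vanish.

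For (ii), the same formula shows that $\mathrm{F}_{11},\mathrm{F}_{12},\mathrm{F}_{22},\mathrm{F}_{23},\mathrm{F}_{33}$ are idempotent, so each corresponding matrix is an idempotent rational matrix. The idempotent identity read at a diagonal position gives $(M_{ij})_{aa}\in\{0,1\}$ and $\mathrm{tr}(M_{ij})=\mathrm{rk}(M_{ij})$. The heart of the argument will be to show that all five of these traces coincide. Using cyclicity and the relations $M_{12}M_{21}=M_{11}$, $M_{21}M_{12}=M_{22}$, $M_{23}M_{32}=M_{22}$, $M_{32}M_{23}=M_{33}$ read off from \eqref{eq1n}, one obtains $\mathrm{tr}(M_{11})=\mathrm{tr}(M_{22})=\mathrm{tr}(M_{33})$. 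To link $M_{11}$ with $M_{12}$ I would instead use the ``absorption'' relations $M_{11}M_{12}=M_{12}$ and $M_{12}M_{11}=M_{11}$: the first gives $\mathrm{im}(M_{12})\subseteq\mathrm{im}(M_{11})$ in $\mathbb{Q}^k$, the second gives the reverse inclusion, hence the two idempotents have the same image, the same rank, and therefore the same trace. The analogous relations $M_{22}M_{23}=M_{23}$ and $M_{23}M_{22}=M_{22}$ handle the remaining pair.

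Finally, since $\mathrm{F}\circ\mathrm{F}\cong\mathrm{F}^{\oplus 5}$, the matrix $M=[\mathrm{F}]=\sum_{i,j}M_{ij}$ satisfies $M^2=5M$, and by Proposition~\ref{prop101} (or directly by the Perron-Frobenius argument in its proof) has trace $5$. Combining this with part (i), the five common idempotent traces must sum to $5$, so each equals $1$; together with $(M_{ij})_{aa}\in\{0,1\}$ this forces exactly one non-zero diagonal entry per matrix. The step I expect to be the main obstacle is the image-comparison argument linking $M_{11}$ to $M_{12}$ (and $M_{22}$ to $M_{23}$): trace cyclicity is not directly applicable, since the relation between these two idempotents is not of the form $\mathrm{tr}(AB)=\mathrm{tr}(BA)$ but one of mutual absorption, and one must recognise that the pair of absorption identities encodes equality of images.
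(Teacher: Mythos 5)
Your proof is correct, and its overall strategy---accounting for the trace $5$ of $M$ among the five idempotent summands---is the same as the paper's; the difference lies in how each idempotent trace gets pinned to $1$. The paper simply observes that a non-zero idempotent matrix with non-negative coefficients has non-zero (hence, being an integer, at least $1$) trace, so five such matrices whose traces sum to at most $5$ must each have trace exactly $1$; this single observation yields part (ii), and part (i) then falls out because the four remaining matrices must have zero trace while having non-negative entries. Your route instead proves the stronger fact that the five traces are all \emph{equal}, via trace cyclicity for the pairs related by $M_{12}M_{21}=M_{11}$, $M_{21}M_{12}=M_{22}$, etc., and the image-comparison argument for the absorption pairs $\{M_{11},M_{12}\}$ and $\{M_{22},M_{23}\}$. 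That argument is valid (the identities $M_{11}M_{12}=M_{12}$ and $M_{12}M_{11}=M_{11}$ do force equality of images over $\mathbb{Q}$, hence of ranks, hence of traces of idempotents), but the step you flagged as the main obstacle is avoidable: you never need the five traces to be equal, only that each is at least $1$, which follows from $M_{ij}\neq 0$ as noted in Subsection~\ref{s11.2}. Your direct derivation of part (i) from $M_{ij}^2=0$ and non-negativity of the entries is also fine, and has the minor advantage of being logically independent of part (ii), whereas the paper deduces (i) from (ii) together with the fact that the $M_{ij}$'s sum to $M$.
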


\begin{proof}
From \eqref{eq1n}, we see that $\mathrm{F}_{ij}$ is idempotent if and only if
\begin{displaymath}
(i,j)\in\{(1,1),(1,2),(2,2),(2,3),(3,3)\}
\end{displaymath}
and $\mathrm{F}_{ij}^2=0$ otherwise. As the trace of a non-zero idempotent with
non-negative coefficients is non-zero, each idempotent $M_{ij}$
has trace at least one. As trace of $M$ is five, it follows that all idempotents $M_{ij}$ have
trace one. This proves claim~\eqref{lem201.2}. Claim~\eqref{lem201.1} follows from
claim~\eqref{lem201.2} as $M$ is the sum of the $M_{ij}$'s.
\end{proof}

\begin{corollary}\label{cor203}
The matrix $M$ cannot be equal to $N_i$, where $i\in\{1,2,\dots,10\}$.
\end{corollary}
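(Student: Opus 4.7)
The plan is to exploit the multiplication rules in \eqref{eq1n} to bound every diagonal entry of $M$ from above by $2$. Since each of $N_1,\ldots,N_{10}$ has some diagonal entry equal to $3$, $4$, or $5$, this bound will immediately rule out all ten matrices.

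From \eqref{eq1n}, the idempotent $1$-morphisms among the $\mathrm{F}_{ij}$'s are precisely $\mathrm{F}_{11},\mathrm{F}_{12},\mathrm{F}_{22},\mathrm{F}_{23},\mathrm{F}_{33}$, so by Lemma~\ref{lem201}\eqref{lem201.2} each of the corresponding matrices $M_{ij}$ is a non-negative integer idempotent of trace $1$. Since rank equals trace for an idempotent, each such $M_{ij}$ has rank $1$. If its unique diagonal entry equal to $1$ sits at position $(p,p)$, the vanishing of all $2\times 2$ minors yields the integer factorization $M_{ij}=c\lambda^{\top}$, where $c$ is the $p$-th column and $\lambda$ the $p$-th row of $M_{ij}$; in particular $c_{p}=\lambda_{p}=1$ and, by Lemma~\ref{lem201}\eqref{lem201.2}, $c_{q}\lambda_{q}=M_{ij,qq}=0$ for every $q\neq p$.

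Next I would use this normal form to detect ``collisions''. If $M_{ij}=c\lambda^{\top}$ and $M_{st}=c'(\lambda')^{\top}$ are two such idempotents both carrying their diagonal $1$ at the same position $(p,p)$, then
\begin{displaymath}
M_{ij}M_{st}=(\lambda^{\top}c')\,c(\lambda')^{\top},
\end{displaymath}
and $\lambda^{\top}c'\geq\lambda_{p}c'_{p}=1$ forces $M_{ij}M_{st}\neq 0$; by the symmetric computation $M_{st}M_{ij}\neq 0$ as well. Therefore, whenever $\mathrm{F}_{ij}\circ\mathrm{F}_{st}=0$ or $\mathrm{F}_{st}\circ\mathrm{F}_{ij}=0$, the two idempotents $M_{ij}$ and $M_{st}$ must place their diagonal $1$'s at different positions. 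A direct inspection of \eqref{eq1n} reveals that the distinct idempotent pairs for which \emph{both} compositions are nonzero are exactly
\begin{displaymath}
\{\mathrm{F}_{11},\mathrm{F}_{12}\},\ \{\mathrm{F}_{12},\mathrm{F}_{22}\},\ \{\mathrm{F}_{22},\mathrm{F}_{23}\},\ \{\mathrm{F}_{23},\mathrm{F}_{33}\},
\end{displaymath}
i.e.\ the edges of a path graph on these five vertices; in particular, every clique of this graph has size at most $2$.

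It then follows immediately that each diagonal position of $M$ can be ``occupied'' by at most two of the five idempotent matrices $M_{ij}$, so every diagonal entry of $M$ is at most $2$. Since each of $N_1,\ldots,N_{10}$ has some diagonal entry of at least $3$, this yields a contradiction in all ten cases. The main conceptual point is setting up the compatibility analysis via the multiplication table; once the rank-one normal form is established, everything else is a brief finite check.
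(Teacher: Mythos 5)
Your proposal is correct and follows essentially the same route as the paper: by Lemma~\ref{lem201} the five idempotent $M_{ij}$'s each contribute a single diagonal $1$, a diagonal entry $\geq 3$ in $M$ would force three of them to share a diagonal position and hence have pairwise non-zero products in both orders, which contradicts the fact (read off from \eqref{eq1n}) that among any three distinct idempotents $\mathrm{F}_{ij}$ some product vanishes. The only cosmetic difference is that you route the ``shared diagonal position implies non-zero product'' step through the rank-one factorization, whereas non-negativity of the entries already gives $(M_{ij}M_{st})_{pp}\geq (M_{ij})_{pp}(M_{st})_{pp}=1$ directly.
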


\begin{proof}
Each of the matrices $N_i$, where $i\in\{1,2,\dots,10\}$, contains a diagonal element which is greater than
or equal to $3$. If $M=N_i$ would be possible, at least three idempotents $M_{ij}$ would have this
diagonal element non-zero. But then any product of any two such matrices would be non-zero.
However, from \eqref{eq1n} we have that, for any three different idempotents $\mathrm{F}_{ij}$,
one of the products of two of these elements is zero. The obtained contradiction completes the proof.
\end{proof}

\subsection{Auxiliary adjunction}\label{s12.1}

We will need the following easy observations:

\begin{lemma}\label{lem211}
Let $D$ be a finite dimensional algebra and $(\mathrm{G},\mathrm{H})$ an adjoint pair of right
exact endofunctors of $D$-mod. Let $L$ and $L'$ be simple $D$-modules and $P$ and  $P'$ their
corresponding indecomposable projective covers. Assume that $L'$ appears in the top of $\mathrm{G}\, P$.
Then $\mathrm{H}\,P'\neq 0$.
\end{lemma}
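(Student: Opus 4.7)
My plan is to chain together three basic facts: the meaning of ``top,'' adjunction, and right exactness.

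First, I would unpack the hypothesis. Saying that $L'$ appears in the top of $\mathrm{G}\,P$ is equivalent to the existence of a non-zero homomorphism $\mathrm{G}\,P \to L'$, that is, $\mathrm{Hom}_D(\mathrm{G}\,P, L') \neq 0$. Applying the adjunction $(\mathrm{G},\mathrm{H})$ gives a natural isomorphism
\begin{displaymath}
\mathrm{Hom}_D(\mathrm{G}\,P, L') \cong \mathrm{Hom}_D(P, \mathrm{H}\,L'),
\end{displaymath}
so the right-hand side is non-zero. In particular $\mathrm{H}\,L' \neq 0$.

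Second, I would use right exactness. Since $P'$ is the projective cover of $L'$, there is a surjection $P' \twoheadrightarrow L'$. Applying the right exact functor $\mathrm{H}$ produces a surjection $\mathrm{H}\,P' \twoheadrightarrow \mathrm{H}\,L'$. Because the target is non-zero by the previous paragraph, the source $\mathrm{H}\,P'$ must also be non-zero, which is exactly the claim.

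I do not expect any real obstacle here: the statement is a one-line consequence of adjunction combined with right exactness, and neither the fact that $L$ and $P$ appear in the statement nor any special feature of $D$-mod is used beyond the standard identification of the top of a module via homomorphisms to simple modules.
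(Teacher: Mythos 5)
Your proof is correct and is essentially identical to the paper's: the authors likewise observe that $0\neq\mathrm{Hom}_D(\mathrm{G}\,P,L')\cong\mathrm{Hom}_D(P,\mathrm{H}\,L')$ forces $\mathrm{H}\,L'\neq 0$, and then invoke right exactness of $\mathrm{H}$ applied to $P'\tto L'$ to conclude $\mathrm{H}\,P'\neq 0$. You have merely spelled out the two steps in slightly more detail.
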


\begin{proof}
By adjunction, we have
\begin{displaymath}
0\neq\mathrm{Hom}_B(\mathrm{G}\, P,L')\cong  \mathrm{Hom}_B(P,\mathrm{H}\, L'),
\end{displaymath}
which implies $\mathrm{H}\, L'\neq 0$. As $\mathrm{H}$ is right exact, this forces
$\mathrm{H}\,P'\neq 0$.
\end{proof}

\begin{lemma}\label{lem212}
We have the following pairs of adjoint $1$-morphisms in $\cC_A$:
\begin{displaymath}
(\mathrm{F}_{33},\mathrm{F}_{23}),\,\,\,\, (\mathrm{F}_{23},\mathrm{F}_{22}),\,\,\,\,
(\mathrm{F}_{22},\mathrm{F}_{12}),\,\,\,\, (\mathrm{F}_{12},\mathrm{F}_{11}).
\end{displaymath}

\end{lemma}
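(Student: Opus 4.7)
The plan is to observe that this lemma is an immediate consequence of Lemma~\ref{lem72-n} applied to the two isomorphisms of $A$-modules recorded at the beginning of Section~\ref{s21}, namely $Ae_1\cong\mathrm{Hom}_{\Bbbk}(e_2A,\Bbbk)$ and $Ae_2\cong\mathrm{Hom}_{\Bbbk}(e_3A,\Bbbk)$. These give us two instances of the hypothesis of Lemma~\ref{lem72-n}: the pair $(s,t)=(1,2)$ and the pair $(s,t)=(2,3)$.

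First, I would apply Lemma~\ref{lem72-n} with $(s,t)=(1,2)$. This yields that $(\mathrm{F}_{i2},\mathrm{F}_{1i})$ is an adjoint pair for every $i\in\{1,2,3\}$. Specialising $i=1$ produces the adjoint pair $(\mathrm{F}_{12},\mathrm{F}_{11})$, and specialising $i=2$ produces the adjoint pair $(\mathrm{F}_{22},\mathrm{F}_{12})$, accounting for the last two pairs in the statement.

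Next, I would apply Lemma~\ref{lem72-n} with $(s,t)=(2,3)$. This yields that $(\mathrm{F}_{i3},\mathrm{F}_{2i})$ is an adjoint pair for every $i\in\{1,2,3\}$. Specialising $i=2$ gives $(\mathrm{F}_{23},\mathrm{F}_{22})$, and specialising $i=3$ gives $(\mathrm{F}_{33},\mathrm{F}_{23})$, accounting for the first two pairs in the statement. Combining the two applications exhausts the four pairs claimed in the lemma.

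There is no real obstacle here: the proof is a bookkeeping exercise matching indices. The only thing worth double-checking is that the bimodule identifications $Ae_1\cong\mathrm{Hom}_{\Bbbk}(e_2A,\Bbbk)$ and $Ae_2\cong\mathrm{Hom}_{\Bbbk}(e_3A,\Bbbk)$ are indeed valid for the algebra $\Bbbk(\bullet\overset{\alpha}{\to}\bullet\overset{\beta}{\to}\bullet)/(\beta\alpha)$, but this was already noted in Section~\ref{s21} and is immediate from inspecting the bases of $Ae_i$ and $e_iA$ using the multiplication table given there.
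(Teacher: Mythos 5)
Your proof is correct and is essentially identical to the paper's own argument, which likewise invokes Lemma~\ref{lem72-n} with the two isomorphisms $Ae_1\cong\mathrm{Hom}_{\Bbbk}(e_2A,\Bbbk)$ and $Ae_2\cong\mathrm{Hom}_{\Bbbk}(e_3A,\Bbbk)$; you have merely made the index specialisations explicit. The paper states the deduction in one sentence, so your version is just a more detailed write-up of the same proof.
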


\begin{proof}
As both,  the left $A$-modules $Ae_1$ and $\mathrm{Hom}_{\Bbbk}(e_2A,\Bbbk)$ are isomorphic
and the left $A$-modules $Ae_2$ and $\mathrm{Hom}_{\Bbbk}(e_3A,\Bbbk)$ are isomorphic,
the claim follows from Lemma~\ref{lem72-n}.
\end{proof}

\subsection{Idempotent integral matrices of rank one}\label{s12.2}

Recall from \cite[Theorem~2]{Fl} that, up to permutation action, idempotent matrices of rank one
with non-negative integer entries have the form
\begin{displaymath}
\left(\begin{array}{ccc}\mathbf{0}&v&vw^t\\
\mathbf{0}&1&w^t\\\mathbf{0}&\mathbf{0}&\mathbf{0}
\end{array}\right)
\end{displaymath}
where $\mathbf{0}$ denotes the zero matrix (of an appropriate size), and $v$ and $w$ are arbitrary vectors
with non-negative integer entries. In particular, if the diagonal entry $1$ is in the $i$-th row,
then the whole matrix can be written as the product of its $i$-th column with its $i$-th row.

\subsection{Filtering matrices $N_{14}$, $N_{15}$ and $N_{16}$ out}\label{s12.3}

\begin{proposition}\label{prop231}
The matrix $M$ cannot be equal to $N_{14}$, $N_{15}$ or $N_{16}$.
\end{proposition}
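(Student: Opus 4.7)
The plan is to show that, if $M = N_i$ for some $i \in \{14, 15, 16\}$, then all nine matrices $M_{ij}$ admit a uniform rank-one factorization, and then to use the adjoint pair $(\mathrm{F}_{12}, \mathrm{F}_{11})$ from Lemma~\ref{lem212} to derive a contradiction from the decomposition of $x_1 + x_2 + x_3$ dictated by $N_i$. By Lemma~\ref{lem201}, each of the five idempotent matrices $M_{11}, M_{12}, M_{22}, M_{23}, M_{33}$ has trace one, hence rank one, and writes as $M_{ab} = u_{ab} x_{ab}^T$ with $x_{ab}^T u_{ab} = 1$. Combining $M_{11} M_{12} = M_{12}$ with $M_{12} M_{11} = M_{11}$ from \eqref{eq1n} gives $u_{11} (x_{11}^T u_{12}) = u_{12}$ and $u_{12} (x_{12}^T u_{11}) = u_{11}$, so the two non-negative-integer constants $x_{11}^T u_{12}$ and $x_{12}^T u_{11}$ are mutually inverse and hence both equal $1$, forcing $u_{11} = u_{12}$; analogous arguments yield $u_{22} = u_{23}$, $x_{12} = x_{22}$, $x_{23} = x_{33}$. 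Write $u := u_{11} = u_{12}$, $u' := u_{22} = u_{23}$, $u'' := u_{33}$, $x_1 := x_{11}$, $x_2 := x_{12} = x_{22}$, $x_3 := x_{23} = x_{33}$; the four nilpotent matrices are computed from \eqref{eq1n} as products of idempotents, for example $M_{21} = M_{22} M_{11} = u' (x_2^T u) x_1^T = u' x_1^T$ using $x_2^T u = \mathrm{tr}(M_{12}) = 1$, and similarly $M_{13} = u x_3^T$, $M_{31} = u'' x_1^T$, $M_{32} = u'' x_2^T$. Hence $M = (u + u' + u'')(x_1 + x_2 + x_3)^T$. Matching with the primitive rank-one decompositions $N_{14} = \mathbf{1}\mathbf{1}^T$, $N_{15} = \mathbf{1}(2,1,1,1)$, $N_{16} = (2,1,1,1)^T \mathbf{1}^T$ and using integrality gives $u + u' + u'' = v$ and $x_1 + x_2 + x_3 = y$, where $v$ and $y$ are the corresponding column and row vectors.

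For the adjoint pair $(\mathrm{F}_{12}, \mathrm{F}_{11})$ of Lemma~\ref{lem212}, the Hom-adjunction $\mathrm{Hom}(\mathrm{F}_{12} P_l, L_k) \cong \mathrm{Hom}(P_l, \mathrm{F}_{11} L_k)$ combined with $\mathrm{F}_{11}$ sending simples to projectives (via Lemma~\ref{lemmamm5} and Proposition~\ref{prop77-n}) expresses both sides in terms of the matrices of $\mathrm{F}_{11}$, $\mathrm{F}_{12}$ and the Cartan matrix $C_B$ of $B$, yielding the identity $(M_{12})^T C_B = C_B M_{11}$. Substituting the factorizations reduces this to the rank-one identity $x_2 (u^T C_B) = (C_B u) x_1^T$; comparing column and row spaces gives $x_2 \propto C_B u$ and $x_1 \propto C_B^T u$. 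Since $(C_B)_{jj} \geq 1$ for every $j$, the vector $C_B u$ satisfies $(C_B u)_j \geq u_j$, so $\mathrm{supp}(u) \subseteq \mathrm{supp}(C_B u) = \mathrm{supp}(x_2)$, and analogously $\mathrm{supp}(u) \subseteq \mathrm{supp}(x_1)$. Therefore $\mathrm{supp}(u) \subseteq \mathrm{supp}(x_1) \cap \mathrm{supp}(x_2)$. The identical argument applied to the adjoint pair $(\mathrm{F}_{23}, \mathrm{F}_{22})$ yields $\mathrm{supp}(u') \subseteq \mathrm{supp}(x_2) \cap \mathrm{supp}(x_3)$.

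To conclude: if $M = N_{14}$ or $M = N_{16}$, then $x_1 + x_2 + x_3 = \mathbf{1}$, so the supports of $x_1, x_2, x_3$ are pairwise disjoint and $\mathrm{supp}(u) \subseteq \mathrm{supp}(x_1) \cap \mathrm{supp}(x_2) = \emptyset$; then $M_{11} = u x_1^T = 0$ contradicts $M_{11}$ being a nonzero idempotent. If $M = N_{15}$, then $x_1 + x_2 + x_3 = (2,1,1,1)^T$, so the supports of the $x_j$ may overlap only at position $1$; non-emptiness of both $\mathrm{supp}(x_1) \cap \mathrm{supp}(x_2)$ and $\mathrm{supp}(x_2) \cap \mathrm{supp}(x_3)$ (required since $u, u' \neq 0$) forces $(x_1)_1, (x_2)_1, (x_3)_1 \geq 1$, so $(x_1 + x_2 + x_3)_1 \geq 3$, contradicting $(x_1 + x_2 + x_3)_1 = 2$. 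The main technical obstacle is the first step: establishing the uniform rank-one factorization requires combining the multiplication rules \eqref{eq1n} with the integrality of the matrix entries to collapse all proportionality constants to equalities; setting up the matrix form of the adjunction identity cleanly is the other key ingredient.
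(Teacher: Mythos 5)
Your proof is correct, but it takes a genuinely different route from the paper's. The paper works entry by entry: it locates the unique diagonal $1$ of each idempotent $M_{ab}$ (Lemma~\ref{lem201}), invokes the adjunction $(\mathrm{F}_{12},\mathrm{F}_{11})$ only through Lemma~\ref{lem211} to force a single non-zero entry of $M_{11}$ in the column of $M_{12}$'s diagonal $1$, and then contradicts the explicit shapes of $N_{14}$, $N_{15}$, $N_{16}$ (an off-diagonal entry $\geq 2$ for $N_{14}$, and clashes with $M_{23}M_{12}=0$, resp.\ $M_{11}M_{22}=0$, for $N_{15}$, resp.\ $N_{16}$). You instead extract two global structural facts: the uniform factorization $M_{ab}=u_a x_b^{T}$ with shared column and row vectors (obtained by playing the two-sided relations $M_{11}M_{12}=M_{12}$, $M_{12}M_{11}=M_{11}$, etc., against integrality of the proportionality constants), and the Cartan intertwining identity $M_{12}^{T}C_B=C_B M_{11}$, which is a linearized, strictly stronger form of Lemma~\ref{lem211}; the latter costs you the extra input of exactness of the action (available via Proposition~\ref{prop77-n}, as you note), whereas the paper's Lemma~\ref{lem211} needs only right exactness. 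I checked the key steps: trace one forces rank one, the non-negative integer factorization comes from Subsection~\ref{s12.2} (worth citing explicitly), all the products $M_{22}M_{11}=M_{21}$, $M_{12}M_{23}=M_{13}$, etc., have multiplicity one in \eqref{eq1n}, and the primitivity of the row and column vectors of $N_{14}$, $N_{15}$, $N_{16}$ pins down $u+u'+u''$ and $x_1+x_2+x_3$ exactly, so the support counting at the end is valid. A notable bonus of your approach: the same support constraints $\mathrm{supp}(u)\subseteq\mathrm{supp}(x_1)\cap\mathrm{supp}(x_2)$ and $\mathrm{supp}(u')\subseteq\mathrm{supp}(x_2)\cap\mathrm{supp}(x_3)$ also rule out $N_{11}$ (primitive row vector $(1,1,1)$) and $N_{12}$ (primitive row vector $(1,2,1)$) in one stroke, which the paper handles by a long separate case analysis in Section~\ref{s12n}.
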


\begin{proof}
Assume that the diagonals of the matrices $M_{11}$ and $M_{12}$ are different. This means that
$M_{11}$ has $1$ in row $i$, that $M_{12}$ has $1$ in row $j$, and that $i\neq j$.

Then $\mathrm{F}_{12}\, P_j$ has $P_j$ as a direct summand. Therefore, by
combining Lemmata~\ref{lem211} and \ref{lem212}, we have that $M_{11}$
must have a non-zero element in column $j$. From Subsection~\ref{s12.2}
it follows that $M_{11}$ has a non-zero entry in position $(i,j)$.

As $M_{11}$ has a non-zero entry in position $(i,j)$ and the matrix $M_{12}$ has $1$ in position $(j,j)$,
it follows that $M_{11}M_{12}$ has a non-zero entry in position $(i,j)$. From \eqref{eq1n},
we have $M_{11}M_{12}=M_{12}$, which means that $M_{12}$ has a non-zero entry in position $(i,j)$.
This already means that the case $M=N_{14}$ is not possible.

Assume $M=N_{15}$. Then we must have $j=1$. Exactly the same argument as above applied to
$M_{22}$ and $M_{23}$ shows that $M_{23}$ has $1$ in position $(1,1)$. This contradicts
$M_{23}M_{12}=0$ as follows from \eqref{eq1n}. Therefore $M=N_{15}$ is not possible.

Assume $M=N_{16}$. Then we must have $i=1$. Exactly the same argument as above applied to
$M_{22}$ and $M_{23}$ shows that $M_{22}$ has $1$ in position $(1,1)$. This contradicts
$M_{11}M_{22}=0$ as follows from \eqref{eq1n}. Therefore $M=N_{16}$ is not possible. This completes the proof.
\end{proof}

The remaining cases for $M$ will be studied on a case-by-case basis.

\section{Filtering matrices $N_{11}$ and  $N_{12}$ out}\label{s12n}

\subsection{Statement}\label{s12.5}

The main aim of this section is to prove the following

\begin{proposition}\label{prop251}
The matrix $M$ cannot be equal to $N_{11}$ or $N_{12}$.
\end{proposition}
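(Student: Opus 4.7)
The plan is to determine the five rank-one idempotent matrices $M_{11}, M_{12}, M_{22}, M_{23}, M_{33}$ (whose rank-one-ness follows from Lemma~\ref{lem201}) explicitly enough to extract a contradiction with the row structure of $N_{11}$ and $N_{12}$. Using the factorization from Section~\ref{s12.2}, write $M_{ab}=v^{(ab)}(w^{(ab)})^T$ with $v^{(ab)}_{r_{ab}}=w^{(ab)}_{r_{ab}}=1$, where $r_{ab}$ denotes the position of the unique diagonal $1$ of $M_{ab}$. Since $N_{11}$ and $N_{12}$ both have diagonal $(2,2,1)$, the multiset $\{r_{11},r_{12},r_{22},r_{23},r_{33}\}$ equals $\{1,1,2,2,3\}$.

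Exploiting the compositions $M_{11}M_{12}=M_{12}$, $M_{22}M_{23}=M_{23}$, $M_{12}M_{22}=M_{12}$ and $M_{23}M_{33}=M_{23}$ from \eqref{eq1n} together with integrality of entries, one derives the equalities $v^{(11)}=v^{(12)}$, $v^{(22)}=v^{(23)}$, $w^{(12)}=w^{(22)}$ and $w^{(23)}=w^{(33)}$. Then Lemma~\ref{lem211} applied to the adjoint pair $(\mathrm{F}_{12},\mathrm{F}_{11})$ from Lemma~\ref{lem212}, using that $P_{r_{12}}$ is a summand of $\mathrm{F}_{12}P_{r_{12}}$, forces $w^{(11)}_{r_{12}}\neq 0$; combined with $v^{(11)}_{r_{12}}=1$ this gives $M_{11}(r_{12},r_{12})\geq 1$, which by Lemma~\ref{lem201} yields $r_{11}=r_{12}$. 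The analogous argument for $(\mathrm{F}_{23},\mathrm{F}_{22})$ gives $r_{22}=r_{23}$. The multiset constraint then forces $r_{33}=3$ and $\{r_{11},r_{22}\}=\{1,2\}$, leaving two subcases.

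In each subcase, the idempotency condition $(w^{(ab)})^Tv^{(ab)}=1$ together with the zero-product relations $M_{11}M_{2*}=M_{11}M_{3*}=0$, $M_{22}M_{3*}=0$ and $M_{33}M_{1*}=0$ pins down all remaining coordinates of $v^{(ab)}$ and $w^{(ab)}$ up to a single free non-negative integer parameter, say $v_3:=v^{(11)}_3$; the four nilpotent matrices are then recovered as the rank-one products $M_{21}=M_{22}M_{11}$, $M_{32}=M_{33}M_{22}$, $M_{31}=M_{33}M_{21}$ and $M_{13}=M_{12}M_{23}$. The main obstacle is to convert this bookkeeping into a row-level contradiction with $N_{11}$ and $N_{12}$, and the key observation is that in the subcase $r_{11}=1$, $r_{22}=2$ every row of $M$ is a non-negative integer multiple of one common vector, with row~$3$ equal to $(1+v_3)$ times row~$1$; hence row~$3$ is entrywise at least row~$1$. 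But row~$3$ of $N_{11}$ is $(1,1,1)$ while row~$1$ is $(2,2,2)$, and row~$3$ of $N_{12}$ is $(1,2,1)$ while row~$1$ is $(2,4,2)$, in both cases strictly smaller. An analogous computation in the swapped subcase (with the roles of rows $1$ and $2$ interchanged) yields the same contradiction, completing the proof.
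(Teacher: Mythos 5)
Your overall strategy is sound and overlaps substantially with the paper's, but it packages the final case differently. The paper's proof runs through Lemma~\ref{lem273}: since the diagonal of $M$ is $(2,2,1)$, exactly two disjoint pairs of the five idempotents share a diagonal position, and each such pair must have all products non-zero, leaving three configurations; two of them are killed by exactly the adjunction argument you use (via Lemmata~\ref{lem211} and \ref{lem212}), and the surviving configuration $\{\mathrm{F}_{11},\mathrm{F}_{12}\}$, $\{\mathrm{F}_{22},\mathrm{F}_{23}\}$ is eliminated by entry-by-entry propagation of the zero products and of identities such as $M_{13}M_{31}=M_{11}$ and $M_{23}M_{31}=M_{21}$. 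Your derivation of $r_{11}=r_{12}$ and $r_{22}=r_{23}$ is a clean repackaging of the same two eliminations (and your extraction of $v^{(11)}=v^{(12)}$ etc.\ from $M_{11}M_{12}=M_{12}$ via the rank-one factorization is correct: the scalar $(w^{(11)})^Tv^{(12)}$ is forced to be $1$ by evaluating at the coordinate $r_{12}$). What is genuinely different, and rather nicer, is your endgame: instead of the paper's ad hoc entry chase you observe that rows $1$ and $2$ of $M$ are supported entirely on the blocks $M_{1*}$ resp.\ $M_{2*}$, that row $3$ receives a copy of $w^{(11)}+w+z$ from $M_{31}+M_{32}+M_{33}$ (the coefficients there being $1$ because $M_{3j}=M_{33}M_{2j}$ and $z_{r_{22}}=1$), and hence that row $3$ dominates row $1$ entrywise --- which is incompatible with the Perron--Frobenius row ratios $1/2$ of $N_{11}$ and $N_{12}$. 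In fact the bookkeeping forces $M=N_{13}$ outright, which is consistent with Section~\ref{s19}.

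There is one step that, as literally stated, fails. In the swapped subcase $r_{11}=r_{12}=2$, $r_{22}=r_{23}=1$, the ``roles of rows $1$ and $2$ interchanged'' version of your inequality is ``row $3$ is entrywise at least row $2$'', and this is \emph{not} a contradiction for $N_{12}$, whose second and third rows are both $(1,2,1)$. To close this subcase you need the further observation (which your own factorization yields) that rows $1$ and $2$ of $M$ coincide: row $1$ equals $(w^Tv)\,w^{(11)}+w+z$ with $w^Tv=1$ by idempotency of $M_{12}$, hence equals row $2$, whence row $3\ge$ row $1=(2,4,2)$ does contradict $N_{12}$ (alternatively, the equality of rows $1$ and $2$ already contradicts $N_{12}$ directly). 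Two smaller points: your ``single free parameter $v_3$'' is in fact forced to vanish by $M_{33}M_{11}=0$, which is harmless; and since Proposition~\ref{prop101} is only up to permutation action, you should either say explicitly that you choose the ordering so that $M$ equals $N_{11}$ or $N_{12}$ on the nose (which legitimizes ignoring the variant $N'_{12}$ that the paper treats separately), or also check the representative with rows $1$ and $2$ of $N_{12}$ swapped.
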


We start with the following observation.

\begin{lemma}\label{lem273}
The only unordered pairs of idempotent $1$-morphisms of the form $\mathrm{F}_{ij}$ such that
the product of any two elements in the pair is non-zero are
\begin{displaymath}
\{\mathrm{F}_{11},\mathrm{F}_{12}\},\quad
\{\mathrm{F}_{12},\mathrm{F}_{22}\},\quad
\{\mathrm{F}_{22},\mathrm{F}_{23}\},\quad
\{\mathrm{F}_{23},\mathrm{F}_{33}\}.
\end{displaymath}
\end{lemma}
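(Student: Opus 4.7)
The plan is to verify the statement by direct inspection of the composition table \eqref{eq1n}. From \eqref{eq1n} (or from Lemma~\ref{lem201}\eqref{lem201.2}), the idempotent $1$-morphisms of the form $\mathrm{F}_{ij}$ are precisely the five $1$-morphisms $\mathrm{F}_{11}$, $\mathrm{F}_{12}$, $\mathrm{F}_{22}$, $\mathrm{F}_{23}$, $\mathrm{F}_{33}$, so there are $\binom{5}{2} = 10$ unordered pairs to examine.

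The convenient reformulation is the following: by \eqref{eqnn1}, we have $\mathrm{F}_{ij}\circ\mathrm{F}_{st} = \mathrm{F}_{it}^{\oplus \dim(e_jAe_s)}$, and this is nonzero if and only if $e_jAe_s \neq 0$. From the multiplication table of $A$ (equivalently, from the quiver $1\overset{\alpha}{\to}2\overset{\beta}{\to}3$ with relation $\beta\alpha=0$), the nonzero spaces $e_jAe_s$ are exactly those with $(j,s) \in \{(1,1),(2,1),(2,2),(3,2),(3,3)\}$. Thus, both products in the pair $\{\mathrm{F}_{ij},\mathrm{F}_{st}\}$ are nonzero iff both $(j,s)$ and $(t,i)$ belong to this set.

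Now I would run through the ten pairs. The four listed pairs clearly satisfy the criterion. The remaining six fail as follows: the pairs $\{\mathrm{F}_{11},\mathrm{F}_{22}\}$, $\{\mathrm{F}_{11},\mathrm{F}_{23}\}$, $\{\mathrm{F}_{11},\mathrm{F}_{33}\}$ fail because $\mathrm{F}_{11}\circ\mathrm{F}_{st}$ involves $e_1Ae_s$ with $s>1$, which vanishes; the pairs $\{\mathrm{F}_{12},\mathrm{F}_{33}\}$ and $\{\mathrm{F}_{22},\mathrm{F}_{33}\}$ fail because $\mathrm{F}_{ij}\circ\mathrm{F}_{33}$ involves $e_jAe_3$ with $j<3$, which vanishes; finally, $\{\mathrm{F}_{12},\mathrm{F}_{23}\}$ fails because $\mathrm{F}_{23}\circ\mathrm{F}_{12}$ involves $e_3Ae_1$, which vanishes due to the relation $\beta\alpha = 0$. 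All of this can be read directly off \eqref{eq1n}.

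There is no real obstacle here, since this is a purely combinatorial verification from a table already displayed in the paper; the main point of the lemma is to record the pattern that permitted pairs are exactly the adjacent ones along the chain $\mathrm{F}_{11}-\mathrm{F}_{12}-\mathrm{F}_{22}-\mathrm{F}_{23}-\mathrm{F}_{33}$, which mirrors the shape of the quiver with its zero relation.
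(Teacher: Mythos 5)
Your verification is correct and follows essentially the same route as the paper, whose entire proof is the one-line remark that the claim follows directly from the composition table~\eqref{eq1n}; you simply spell out the inspection via the criterion $\mathrm{F}_{ij}\circ\mathrm{F}_{st}\neq 0 \Leftrightarrow e_jAe_s\neq 0$ and check all ten pairs. Nothing is missing.
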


\begin{proof}
This follows directly from \eqref{eq1n}.
\end{proof}

\subsection{Proof for $M=N_{11}$}\label{s12.6}

We will arrange matrices $M_{ij}$, where $i,j=1,2,3$, as follows:
\begin{equation}\label{eq773}
\begin{array}{ccc}
M_{11}&M_{12}&M_{13}\\
M_{21}&M_{22}&M_{23}\\
M_{31}&M_{32}&M_{33}.
\end{array}
\end{equation}
Assume $M=N_{11}$. The diagonal elements in $N_{11}$ are $(2,2,1)$. Therefore,
two pairs of idempotent matrices of the form $M_{ij}$ would have common
diagonal elements. Any product of matrices in any such pair would be non-zero.
Therefore, using  Lemma~\ref{lem273}, we have three cases to consider.

{\bf Case~1.} Suppose first that the pairs of idempotent matrices which share
diagonal elements are $\{\mathrm{F}_{11},\mathrm{F}_{12}\}$ and $\{\mathrm{F}_{22},\mathrm{F}_{23}\}$.
Up to permutation action, we may assume that
\begin{displaymath}
\left(\begin{array}{ccc}1&*&*\\ *&0&*\\ *&*&0\end{array}\right),\quad
\left(\begin{array}{ccc}1&*&*\\ *&0&*\\ *&*&0\end{array}\right),\quad
\left(\begin{array}{ccc}0&*&*\\ *&0&*\\ *&*&0\end{array}\right),
\end{displaymath}
\begin{displaymath}
\left(\begin{array}{ccc}0&*&*\\ *&0&*\\ *&*&0\end{array}\right),\quad
\left(\begin{array}{ccc}0&*&*\\ *&1&*\\ *&*&0\end{array}\right),\quad
\left(\begin{array}{ccc}0&*&*\\ *&1&*\\ *&*&0\end{array}\right),
\end{displaymath}
\begin{displaymath}
\left(\begin{array}{ccc}0&*&*\\ *&0&*\\ *&*&0\end{array}\right),\quad
\left(\begin{array}{ccc}0&*&*\\ *&0&*\\ *&*&0\end{array}\right),\quad
\left(\begin{array}{ccc}0&*&*\\ *&0&*\\ *&*&1\end{array}\right).
\end{displaymath}
Using all possible zero products which appear in \eqref{eq1n}, we obtain that the $M_{ij}$'s look as follows:
\begin{displaymath}
\left(\begin{array}{ccc}1&0&0\\ 0&0&0\\ 0&0&0\end{array}\right),\quad
\left(\begin{array}{ccc}1&*&0\\ 0&0&0\\ 0&0&0\end{array}\right),\quad
\left(\begin{array}{ccc}0&*&*\\ 0&0&0\\ 0&0&0\end{array}\right),
\end{displaymath}
\begin{displaymath}
\left(\begin{array}{ccc}0&0&0\\ *&0&0\\ *&0&0\end{array}\right),\quad
\left(\begin{array}{ccc}0&0&0\\ *&1&0\\ *&*&0\end{array}\right),\quad
\left(\begin{array}{ccc}0&0&0\\ 0&1&*\\ 0&*&0\end{array}\right),
\end{displaymath}
\begin{displaymath}
\left(\begin{array}{ccc}0&0&0\\ 0&0&0\\ *&0&0\end{array}\right),\quad
\left(\begin{array}{ccc}0&0&0\\ 0&0&0\\ *&*&0\end{array}\right),\quad
\left(\begin{array}{ccc}0&0&0\\ 0&0&0\\ 0&*&1\end{array}\right).
\end{displaymath}
As all matrices must be non-zero and add up to $M$, we obtain
\begin{displaymath}
\left(\begin{array}{ccc}1&0&0\\ 0&0&0\\ 0&0&0\end{array}\right),\quad
\left(\begin{array}{ccc}1&*&0\\ 0&0&0\\ 0&0&0\end{array}\right),\quad
\left(\begin{array}{ccc}0&*&2\\ 0&0&0\\ 0&0&0\end{array}\right),
\end{displaymath}
\begin{displaymath}
\left(\begin{array}{ccc}0&0&0\\ *&0&0\\ 0&0&0\end{array}\right),\quad
\left(\begin{array}{ccc}0&0&0\\ *&1&0\\ 0&0&0\end{array}\right),\quad
\left(\begin{array}{ccc}0&0&0\\ 0&1&2\\ 0&0&0\end{array}\right),
\end{displaymath}
\begin{displaymath}
\left(\begin{array}{ccc}0&0&0\\ 0&0&0\\ 1&0&0\end{array}\right),\quad
\left(\begin{array}{ccc}0&0&0\\ 0&0&0\\ 0&1&0\end{array}\right),\quad
\left(\begin{array}{ccc}0&0&0\\ 0&0&0\\ 0&0&1\end{array}\right).
\end{displaymath}
This contradicts $M_{13}M_{31}=M_{11}$ which is a
consequence of \eqref{eq1n}. Therefore this case is not possible.

{\bf Case~2.} Suppose now that the pairs of idempotent matrices which share
diagonal elements are $\{\mathrm{F}_{12},\mathrm{F}_{22}\}$ and $\{\mathrm{F}_{23},\mathrm{F}_{33}\}$.
Up to permutation action, we may assume that the $M_{ij}$'s look as follows:
\begin{displaymath}
\left(\begin{array}{ccc}0&*&*\\ *&0&*\\ *&*&1\end{array}\right),\quad
\left(\begin{array}{ccc}1&*&*\\ *&0&*\\ *&*&0\end{array}\right),\quad
\left(\begin{array}{ccc}0&*&*\\ *&0&*\\ *&*&0\end{array}\right),
\end{displaymath}
\begin{displaymath}
\left(\begin{array}{ccc}0&*&*\\ *&0&*\\ *&*&0\end{array}\right),\quad
\left(\begin{array}{ccc}1&*&*\\ *&0&*\\ *&*&0\end{array}\right),\quad
\left(\begin{array}{ccc}0&*&*\\ *&1&*\\ *&*&0\end{array}\right),
\end{displaymath}
\begin{displaymath}
\left(\begin{array}{ccc}0&*&*\\ *&0&*\\ *&*&0\end{array}\right),\quad
\left(\begin{array}{ccc}0&*&*\\ *&0&*\\ *&*&0\end{array}\right),\quad
\left(\begin{array}{ccc}0&*&*\\ *&1&*\\ *&*&0\end{array}\right).
\end{displaymath}
Using all possible zero products which appear in \eqref{eq1n}, we obtain that the $M_{ij}$'s look as follows:
\begin{displaymath}
\left(\begin{array}{ccc}0&0&*\\ 0&0&0\\ 0&0&1\end{array}\right),\quad
\left(\begin{array}{ccc}1&0&*\\ 0&0&0\\ *&0&0\end{array}\right),\quad
\left(\begin{array}{ccc}0&*&0\\ 0&0&0\\ 0&*&0\end{array}\right),
\end{displaymath}
\begin{displaymath}
\left(\begin{array}{ccc}0&0&*\\ 0&0&*\\ 0&0&0\end{array}\right),\quad
\left(\begin{array}{ccc}1&0&*\\ *&0&*\\ 0&0&0\end{array}\right),\quad
\left(\begin{array}{ccc}0&*&0\\ 0&1&0\\ 0&0&0\end{array}\right),
\end{displaymath}
\begin{displaymath}
\left(\begin{array}{ccc}0&0&0\\ 0&0&*\\ 0&0&0\end{array}\right),\quad
\left(\begin{array}{ccc}0&0&0\\ *&0&*\\ 0&0&0\end{array}\right),\quad
\left(\begin{array}{ccc}0&0&0\\ 0&1&0\\ 0&0&0\end{array}\right).
\end{displaymath}
Here we have that $P_2$ is a direct summand of $\mathrm{F}_{23}\, P_2$.
From Lemmata~\ref{lem211} and \ref{lem212},
it follows that $\mathrm{F}_{22}\, P_2\neq 0$, which is a contradiction.
Therefore this case cannot occur either.

{\bf Case~3.} Suppose first that the pairs of idempotent matrices which share
diagonal elements are $\{\mathrm{F}_{11},\mathrm{F}_{12}\}$ and $\{\mathrm{F}_{23},\mathrm{F}_{33}\}$.
Up to permutation action, we may assume that the $M_{ij}$'s look as follows:
\begin{displaymath}
\left(\begin{array}{ccc}1&*&*\\ *&0&*\\ *&*&0\end{array}\right),\quad
\left(\begin{array}{ccc}1&*&*\\ *&0&*\\ *&*&0\end{array}\right),\quad
\left(\begin{array}{ccc}0&*&*\\ *&0&*\\ *&*&0\end{array}\right),
\end{displaymath}
\begin{displaymath}
\left(\begin{array}{ccc}0&*&*\\ *&0&*\\ *&*&0\end{array}\right),\quad
\left(\begin{array}{ccc}0&*&*\\ *&0&*\\ *&*&1\end{array}\right),\quad
\left(\begin{array}{ccc}0&*&*\\ *&1&*\\ *&*&0\end{array}\right),
\end{displaymath}
\begin{displaymath}
\left(\begin{array}{ccc}0&*&*\\ *&0&*\\ *&*&0\end{array}\right),\quad
\left(\begin{array}{ccc}0&*&*\\ *&0&*\\ *&*&0\end{array}\right),\quad
\left(\begin{array}{ccc}0&*&*\\ *&1&*\\ *&*&0\end{array}\right).
\end{displaymath}
Using all possible zero products which appear in \eqref{eq1n}, we obtain that the $M_{ij}$'s look as follows:
\begin{displaymath}
\left(\begin{array}{ccc}1&0&0\\ 0&0&0\\ *&0&0\end{array}\right),\quad
\left(\begin{array}{ccc}1&0&*\\ 0&0&0\\ *&0&0\end{array}\right),\quad
\left(\begin{array}{ccc}0&*&*\\ 0&0&0\\ 0&*&0\end{array}\right),
\end{displaymath}
\begin{displaymath}
\left(\begin{array}{ccc}0&0&0\\ *&0&0\\ *&0&0\end{array}\right),\quad
\left(\begin{array}{ccc}0&0&0\\ *&0&*\\ *&0&1\end{array}\right),\quad
\left(\begin{array}{ccc}0&0&0\\ 0&1&*\\ 0&*&0\end{array}\right),
\end{displaymath}
\begin{displaymath}
\left(\begin{array}{ccc}0&0&0\\ *&0&0\\ 0&0&0\end{array}\right),\quad
\left(\begin{array}{ccc}0&0&0\\ *&0&*\\ 0&0&0\end{array}\right),\quad
\left(\begin{array}{ccc}0&0&0\\ 0&1&*\\ 0&0&0\end{array}\right).
\end{displaymath}
Here we have that $P_2$ is a direct summand of $\mathrm{F}_{23}\, P_2$.
From Lemmata~\ref{lem211} and \ref{lem212},
it follows that $\mathrm{F}_{22}\, P_2\neq 0$, which is a contradiction.
Therefore this case cannot occur either.

This completes the proof of Lemma~\ref{lem273}, for $M=N_{11}$.

\subsection{Proof for $M=N_{12}$}\label{s12.7}

Let
\begin{displaymath}
N'_{12}:=\left(\begin{array}{ccc}2&1&1\\4&2&2\\2&1&1\end{array}\right).
\end{displaymath}
The matrix $N'_{12}$ reduces to $M=N_{12}$ by permutation action,
however, it is convenient to use the freedom of permutation action in another way,
see below. Because of  Lemma~\ref{lem273}, we have three cases to consider.

{\bf Case~1.} Suppose first that the pairs of idempotent matrices which share
diagonal elements are $\{\mathrm{F}_{11},\mathrm{F}_{12}\}$ and $\{\mathrm{F}_{22},\mathrm{F}_{23}\}$.
Then, using permutation action and  all possible zero products which appear in
\eqref{eq1n}, we obtain that the $M_{ij}$'s look as follows:
\begin{displaymath}
\left(\begin{array}{ccc}1&0&0\\ 0&0&0\\ 0&0&0\end{array}\right),\quad
\left(\begin{array}{ccc}1&*&0\\ 0&0&0\\ 0&0&0\end{array}\right),\quad
\left(\begin{array}{ccc}0&*&*\\ 0&0&0\\ 0&0&0\end{array}\right),
\end{displaymath}
\begin{displaymath}
\left(\begin{array}{ccc}0&0&0\\ *&0&0\\ *&0&0\end{array}\right),\quad
\left(\begin{array}{ccc}0&0&0\\ *&1&0\\ *&*&0\end{array}\right),\quad
\left(\begin{array}{ccc}0&0&0\\ 0&1&*\\ 0&*&0\end{array}\right),
\end{displaymath}
\begin{displaymath}
\left(\begin{array}{ccc}0&0&0\\ 0&0&0\\ *&0&0\end{array}\right),\quad
\left(\begin{array}{ccc}0&0&0\\ 0&0&0\\ *&*&0\end{array}\right),\quad
\left(\begin{array}{ccc}0&0&0\\ 0&0&0\\ 0&*&1\end{array}\right).
\end{displaymath}
If $M=N_{12}$, then the $(1,3)$-entry of $M_{13}$ equals $2$
while the $(3,1)$-entry of $M_{31}$ equals $1$. As the $(1,1)$-entry of $M_{11}$ is $1$,
we get a contradiction to $M_{13}M_{31}=M_{11}$, which follows from \eqref{eq1n}. This
implies that $M=N'_{12}$ and, using also $M_{12}M_{21}=M_{11}$, we have:
\begin{displaymath}
\left(\begin{array}{ccc}1&0&0\\ 0&0&0\\ 0&0&0\end{array}\right),\quad
\left(\begin{array}{ccc}1&1&0\\ 0&0&0\\ 0&0&0\end{array}\right),\quad
\left(\begin{array}{ccc}0&0&1\\ 0&0&0\\ 0&0&0\end{array}\right),
\end{displaymath}
\begin{displaymath}
\left(\begin{array}{ccc}0&0&0\\ 1&0&0\\ *&0&0\end{array}\right),\quad
\left(\begin{array}{ccc}0&0&0\\ *&1&0\\ *&*&0\end{array}\right),\quad
\left(\begin{array}{ccc}0&0&0\\ 0&1&*\\ 0&*&0\end{array}\right),
\end{displaymath}
\begin{displaymath}
\left(\begin{array}{ccc}0&0&0\\ 0&0&0\\ 1&0&0\end{array}\right),\quad
\left(\begin{array}{ccc}0&0&0\\ 0&0&0\\ *&*&0\end{array}\right),\quad
\left(\begin{array}{ccc}0&0&0\\ 0&0&0\\ 0&*&1\end{array}\right).
\end{displaymath}
As the $M_{ij}$'s must add up to $M$, we have
\begin{displaymath}
\left(\begin{array}{ccc}1&0&0\\ 0&0&0\\ 0&0&0\end{array}\right),\quad
\left(\begin{array}{ccc}1&1&0\\ 0&0&0\\ 0&0&0\end{array}\right),\quad
\left(\begin{array}{ccc}0&0&1\\ 0&0&0\\ 0&0&0\end{array}\right),
\end{displaymath}
\begin{displaymath}
\left(\begin{array}{ccc}0&0&0\\ 1&0&0\\ *&0&0\end{array}\right),\quad
\left(\begin{array}{ccc}0&0&0\\ 3&1&0\\ *&*&0\end{array}\right),\quad
\left(\begin{array}{ccc}0&0&0\\ 0&1&2\\ 0&*&0\end{array}\right),
\end{displaymath}
\begin{displaymath}
\left(\begin{array}{ccc}0&0&0\\ 0&0&0\\ 1&0&0\end{array}\right),\quad
\left(\begin{array}{ccc}0&0&0\\ 0&0&0\\ *&*&0\end{array}\right),\quad
\left(\begin{array}{ccc}0&0&0\\ 0&0&0\\ 0&*&1\end{array}\right).
\end{displaymath}
This contradicts $M_{23}M_{31}=M_{21}$, which follows from \eqref{eq1n}.
Therefore this case cannot occur.

{\bf Case~2.} Suppose now that the pairs of idempotent matrices which share
diagonal elements are $\{\mathrm{F}_{12},\mathrm{F}_{22}\}$ and $\{\mathrm{F}_{23},\mathrm{F}_{33}\}$.
This gives the same contradiction as in Case~2 in Subsection~\ref{s12.6}.
Therefore this case cannot occur either.

{\bf Case~3.} Suppose first that the pairs of idempotent matrices which share
diagonal elements are $\{\mathrm{F}_{11},\mathrm{F}_{12}\}$ and $\{\mathrm{F}_{23},\mathrm{F}_{33}\}$.
This gives the same contradiction as in Case~3 in Subsection~\ref{s12.6}.
Therefore this case cannot occur either.
This completes the proof of Lemma~\ref{lem273}.

\section{Proof of Theorem~\ref{mainresult} for
$\Bbbk(\bullet\overset{\alpha}{\to} \bullet\overset{\beta}{\to} \bullet)/(\beta\alpha)$}\label{s19}

\subsection{Finding the matrices}\label{s19.1}

Combining Proposition~\ref{prop101} with  Corollary~\ref{cor203}, Proposition~\ref{prop231}
and Proposition~\ref{prop251}, we have $M=N_{13}$. We will arrange our matrices
similarly to \eqref{eq773}.

We will need the following easy and general observation:

\begin{lemma}\label{lem253}
Let $M$ be any of the $N_m$'s  and $i,j\in\{1,2,3\}$.
If, for some $s$, the column $s$ in the matrix $M_{ij}$ is non-zero,
then the column $s$ is non-zero in $M_{tj}$, for any $t\in\{1,2,3\}$.
\end{lemma}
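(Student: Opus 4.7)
The plan is to derive this lemma directly from the multiplication table \eqref{eq1n} (equivalently from \eqref{eqnn1}), with no need to separately analyze each matrix $N_m$. The key observation is that, for every $t\in\{1,2,3\}$, the path algebra $A$ satisfies $\dim(e_tAe_t)=1$, since $e_tAe_t=\Bbbk e_t$. Substituting into \eqref{eqnn1} gives
\begin{equation*}
\mathrm{F}_{it}\circ\mathrm{F}_{tj}\;\cong\;\mathrm{F}_{ij}^{\oplus\dim(e_tAe_t)}\;\cong\;\mathrm{F}_{ij}
\end{equation*}
for any $i,j,t\in\{1,2,3\}$. One can also just read this off of the diagonal of \eqref{eq1n} when $t$ is fixed.

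Applying the $2$-representation $\mathbf{M}$ and passing to matrices in the Grothendieck group (as in Subsection~\ref{s0-1.6}) then yields the identity
\begin{equation*}
M_{ij}\;=\;M_{it}\,M_{tj}.
\end{equation*}
Since all entries involved are non-negative integers, the $s$-th column of $M_{ij}$ is obtained as $M_{it}$ applied to the $s$-th column of $M_{tj}$. If the $s$-th column of $M_{tj}$ were the zero vector, then the $s$-th column of $M_{ij}=M_{it}M_{tj}$ would also be zero. Contrapositively, if the $s$-th column of $M_{ij}$ is non-zero, then the $s$-th column of $M_{tj}$ must be non-zero for every $t\in\{1,2,3\}$, which is exactly the assertion of the lemma.

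There is essentially no obstacle: the lemma is a direct consistency statement built into the multiplicative structure of the projective endofunctors, and its content boils down to the one-dimensionality of $e_tAe_t$ for each $t$. The only thing one must be a bit careful about is remembering that the decategorified identity $M_{ij}=M_{it}M_{tj}$ has no multiplicity, which is why the factorization argument cleanly propagates non-vanishing of columns.
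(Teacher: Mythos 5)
Your proof is correct and follows essentially the same route as the paper: both rest on the identity $\mathrm{F}_{it}\circ\mathrm{F}_{tj}\cong\mathrm{F}_{ij}$ (coming from $\dim(e_tAe_t)=1$) and the observation that non-negativity of the entries lets vanishing of a column propagate through the product. The paper phrases this as $\mathrm{F}_{tj}\,P_s=0$ implying $\mathrm{F}_{ij}\,P_s=0$, while you phrase it as the matrix factorization $M_{ij}=M_{it}M_{tj}$, which is the same argument after decategorification.
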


\begin{proof}
The fact that the column $s$ in $M_{ij}$ is non-zero is equivalent to saying that
$\mathrm{F}_{ij}\, P_s\neq 0$ (and similarly for $\mathrm{F}_{tj}$). We have $\mathrm{F}_{it} \circ
\mathrm{F}_{tj}\cong \mathrm{F}_{ij}$ from \eqref{eq1n}. Therefore $\mathrm{F}_{tj}\, P_s=0$
implies $\mathrm{F}_{ij}\, P_s= 0$ and the claim follows.
\end{proof}

\begin{proposition}\label{prop271}
The only possibility for the $M_{ij}$'s is
\begin{displaymath}
\left(\begin{array}{ccc}1&0&0\\ 0&0&0\\ 0&0&0\end{array}\right),\quad
\left(\begin{array}{ccc}1&1&0\\ 0&0&0\\ 0&0&0\end{array}\right),\quad
\left(\begin{array}{ccc}0&1&1\\ 0&0&0\\ 0&0&0\end{array}\right),
\end{displaymath}
\begin{displaymath}
\left(\begin{array}{ccc}0&0&0\\ 1&0&0\\ 0&0&0\end{array}\right),\quad
\left(\begin{array}{ccc}0&0&0\\ 1&1&0\\ 0&0&0\end{array}\right),\quad
\left(\begin{array}{ccc}0&0&0\\ 0&1&1\\ 0&0&0\end{array}\right),
\end{displaymath}
\begin{displaymath}
\left(\begin{array}{ccc}0&0&0\\ 0&0&0\\ 1&0&0\end{array}\right),\quad
\left(\begin{array}{ccc}0&0&0\\ 0&0&0\\ 1&1&0\end{array}\right),\quad
\left(\begin{array}{ccc}0&0&0\\ 0&0&0\\ 0&1&1\end{array}\right).
\end{displaymath}
\end{proposition}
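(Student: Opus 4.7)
The plan is to mimic the case analysis of Subsections~\ref{s12.6} and~\ref{s12.7}, adapted to the diagonal $(2,2,1)$ of $N_{13}$. By Lemma~\ref{lem201}, exactly five matrices $M_{ij}$, namely those with $(i,j)\in\{(1,1),(1,2),(2,2),(2,3),(3,3)\}$, carry a single $1$ on their diagonal, and these five $1$'s must be distributed as $2+2+1$ across diagonal positions $1$, $2$ and $3$. By Lemma~\ref{lem273}, any two idempotents that share a diagonal position must form one of the four allowed pairs listed there, and the two pairs so formed must be disjoint. A direct enumeration of the disjoint pairings of the five idempotents as two pairs plus a singleton then yields exactly the three cases already treated in Subsections~\ref{s12.6} and~\ref{s12.7}: pair $\{\mathrm{F}_{11},\mathrm{F}_{12}\}$ with $\{\mathrm{F}_{22},\mathrm{F}_{23}\}$ and singleton $\mathrm{F}_{33}$; pair $\{\mathrm{F}_{12},\mathrm{F}_{22}\}$ with $\{\mathrm{F}_{23},\mathrm{F}_{33}\}$ and singleton $\mathrm{F}_{11}$; or pair $\{\mathrm{F}_{11},\mathrm{F}_{12}\}$ with $\{\mathrm{F}_{23},\mathrm{F}_{33}\}$ and singleton $\mathrm{F}_{22}$.

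The second and third cases would be ruled out by the same adjointness contradiction as in Subsections~\ref{s12.6} and~\ref{s12.7}. Writing out the skeletons using the rank-one idempotent form of Subsection~\ref{s12.2} together with all zero products read off from~\eqref{eq1n}, one finds that the diagonal $1$ of $M_{23}$ forces some $P_r$ to be a direct summand of $\mathrm{F}_{23}\,P_r$, while the induced shape of $M_{22}$ has its $r$-th column equal to zero. This contradicts Lemmata~\ref{lem211} and~\ref{lem212} applied to the adjoint pair $(\mathrm{F}_{23},\mathrm{F}_{22})$. The precise values of the entries of $N_{13}$ do not interfere with this step, since only the diagonal placement of the idempotents is used.

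For the remaining first case, I would use the permutation action (under which $N_{13}$ is symmetric in rows and columns $1$ and $2$) to fix $\{\mathrm{F}_{11},\mathrm{F}_{12}\}$ at diagonal position $1$, $\{\mathrm{F}_{22},\mathrm{F}_{23}\}$ at position $2$, and $\mathrm{F}_{33}$ at position $3$. The rank-one structure from Subsection~\ref{s12.2} confines each of $M_{11}$ and $M_{12}$ to row $1$; each of $M_{22}$ and $M_{23}$ to rows $1$ and $2$ with column $1$ equal to zero; and $M_{33}$ to column $3$. Imposing the remaining zero products from~\eqref{eq1n} collapses the supports of the four nilpotent matrices $M_{13}, M_{21}, M_{31}, M_{32}$ in a similar fashion, and the non-zero relations such as $M_{11}M_{12}=M_{12}$, $M_{12}M_{21}=M_{11}$, $M_{22}M_{23}=M_{23}$, $M_{13}M_{31}=M_{11}$ and $M_{23}M_{31}=M_{21}$ then propagate the diagonal $1$'s into the correct off-diagonal positions. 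Finally, the identity $\sum_{i,j}M_{ij}=N_{13}$ pins down all remaining entries and excludes any larger values.

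The main obstacle lies in the combinatorial bookkeeping of the third step, where zero products, non-zero products, the rank-one idempotent shape, and the sum constraint must all be simultaneously reconciled across nine matrices. However, this is entirely parallel to the treatment in Subsections~\ref{s12.6} and~\ref{s12.7}, and the rigidity provided by the rank-one idempotent form together with the uniform row structure of $N_{13}$ leaves no slack once the diagonal $1$'s have been placed.
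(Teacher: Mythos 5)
Your proposal is correct and follows essentially the same route as the paper: the same three-case split coming from Lemma~\ref{lem273}, the same dismissal of two cases by the adjunction contradiction of Lemmata~\ref{lem211} and \ref{lem212} exactly as in Subsections~\ref{s12.6} and \ref{s12.7}, and the same pinning-down of the surviving case via the rank-one idempotent shapes, the zero and non-zero products from \eqref{eq1n}, and the sum constraint. The only cosmetic difference is in the very last step, where the paper invokes Lemma~\ref{lem253} to kill the residual entries in the third rows of $M_{21}$, $M_{22}$, $M_{23}$, $M_{32}$, $M_{33}$, whereas your sample list of product relations would need to be supplemented (e.g.\ by $M_{31}M_{12}=M_{32}$ and $M_{31}M_{13}=M_{33}$) to achieve the same effect, which is covered by your appeal to all relations of \eqref{eq1n}.
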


\begin{proof}
Due to Lemma~\ref{lem273}, we have to consider three cases.

{\bf Case~1.} Suppose that the pairs of idempotent matrices which share
diagonal elements are $\{\mathrm{F}_{12},\mathrm{F}_{22}\}$ and $\{\mathrm{F}_{23},\mathrm{F}_{33}\}$.
This gives the same contradiction as in Case~2 in Subsection~\ref{s12.6}. Therefore this
case cannot occur.

{\bf Case~2.} Suppose that the pairs of idempotent matrices which share
diagonal elements are $\{\mathrm{F}_{11},\mathrm{F}_{12}\}$ and $\{\mathrm{F}_{23},\mathrm{F}_{33}\}$.
This gives the same contradiction as in Case~3 in Subsection~\ref{s12.6}.
Therefore this case cannot occur either.

{\bf Case~3.} Suppose that the pairs of idempotent matrices
which share  diagonal elements are $\{\mathrm{F}_{11},\mathrm{F}_{12}\}$ and
$\{\mathrm{F}_{22},\mathrm{F}_{23}\}$. Then, using permutation action and
all possible zero products which appear in \eqref{eq1n},
we obtain that the $M_{ij}$'s look as follows:
\begin{displaymath}
\left(\begin{array}{ccc}1&0&0\\ 0&0&0\\ 0&0&0\end{array}\right),\quad
\left(\begin{array}{ccc}1&*&0\\ 0&0&0\\ 0&0&0\end{array}\right),\quad
\left(\begin{array}{ccc}0&*&*\\ 0&0&0\\ 0&0&0\end{array}\right),
\end{displaymath}
\begin{displaymath}
\left(\begin{array}{ccc}0&0&0\\ *&0&0\\ *&0&0\end{array}\right),\quad
\left(\begin{array}{ccc}0&0&0\\ *&1&0\\ *&*&0\end{array}\right),\quad
\left(\begin{array}{ccc}0&0&0\\ 0&1&*\\ 0&*&0\end{array}\right),
\end{displaymath}
\begin{displaymath}
\left(\begin{array}{ccc}0&0&0\\ 0&0&0\\ *&0&0\end{array}\right),\quad
\left(\begin{array}{ccc}0&0&0\\ 0&0&0\\ *&*&0\end{array}\right),\quad
\left(\begin{array}{ccc}0&0&0\\ 0&0&0\\ 0&*&1\end{array}\right).
\end{displaymath}
Using that all matrices must be non-zero and add up to $M$ and also
$M_{13}M_{31}=M_{11}$, $M_{21}M_{12}=M_{22}$ and $M_{21}M_{13}=M_{23}$, given by \eqref{eq1n}, we have
\begin{displaymath}
\left(\begin{array}{ccc}1&0&0\\ 0&0&0\\ 0&0&0\end{array}\right),\quad
\left(\begin{array}{ccc}1&1&0\\ 0&0&0\\ 0&0&0\end{array}\right),\quad
\left(\begin{array}{ccc}0&1&1\\ 0&0&0\\ 0&0&0\end{array}\right),
\end{displaymath}
\begin{displaymath}
\left(\begin{array}{ccc}0&0&0\\ 1&0&0\\ *&0&0\end{array}\right),\quad
\left(\begin{array}{ccc}0&0&0\\ 1&1&0\\ *&*&0\end{array}\right),\quad
\left(\begin{array}{ccc}0&0&0\\ 0&1&1\\ 0&*&0\end{array}\right),
\end{displaymath}
\begin{displaymath}
\left(\begin{array}{ccc}0&0&0\\ 0&0&0\\ 1&0&0\end{array}\right),\quad
\left(\begin{array}{ccc}0&0&0\\ 0&0&0\\ *&*&0\end{array}\right),\quad
\left(\begin{array}{ccc}0&0&0\\ 0&0&0\\ 0&*&1\end{array}\right).
\end{displaymath}
Comparing the first and the second columns in $M_{32}$ with those of  $M_{22}$
and also the third column in $M_{33}$ with that of  $M_{23}$ and using Lemma~\ref{lem253}
we get exactly the arrangement in the formulation of our proposition.
This completes the proof.
\end{proof}

\subsection{Connecting to the cell $2$-representation}\label{s19.2}

Now we know that the $M_{ij}$'s have the form as specifies in Proposition~\ref{prop271}.
For $i,j=1,2,3$, we denote by $\mathrm{G}_{ij}$ the corresponding indecomposable
projective endofunctor of $\overline{\mathbf{M}}(\mathtt{i})$.

From the form of $M_{i1}$, where $i=1,2,3$, we see that $\mathrm{F}_{i1}$ acts via
$\mathrm{G}_{i1}$ (up to isomorphism). Moreover, we also have
$[P_i:L_1]=\delta_{i,1}$.

From the form of $M_{12}$, we see that $\mathrm{F}_{12}$ acts via either
$\mathrm{G}_{12}$ or $\mathrm{G}_{11}$ or $\mathrm{G}_{12}\oplus \mathrm{G}_{11}$.
However, we already know that $\mathrm{G}_{11}$ has matrix $M_{11}$. This leaves us
with the only possibilities of $\mathrm{G}_{12}$ or $\mathrm{G}_{12}\oplus \mathrm{G}_{11}$.

Assume that $\mathrm{F}_{12}$ acts via $\mathrm{G}_{12}\oplus \mathrm{G}_{11}$. Then the matrix of
$\mathrm{G}_{12}$ is
\begin{displaymath}
\left(\begin{array}{ccc}0&1&0\\ 0&0&0\\ 0&0&0\end{array}\right).
\end{displaymath}
This implies that $[P_i:L_2]=\delta_{i,2}$, for $i=1,2,3$.

According to \eqref{eq101nn}, there is a non-zero $2$-morphism $\alpha:\mathrm{F}_{21}\to \mathrm{F}_{12}$.
As $\mathbf{M}$ is faithful, $\overline{\mathbf{M}}(\alpha)$ is non-zero. Evaluation of the latter at
\begin{itemize}
\item $P_3$ is zero as $P_3$ is annihilated by both $\mathrm{F}_{21}$ and $\mathrm{F}_{12}$;
\item $P_2$ is zero as $P_2$ is annihilated by $\mathrm{F}_{21}$;
\item $P_1$ is zero as $\mathrm{F}_{12}\, P_1\cong P_1$, $\mathrm{F}_{21}\, P_1\cong P_2$
and we also have
\begin{displaymath}
\mathrm{Hom}_{\overline{\mathbf{M}}(\mathtt{i})}(P_2,P_1)=[P_1:L_2]=0,
\end{displaymath}
by the previous paragraph.
\end{itemize}
Therefore $\overline{\mathbf{M}}(\alpha)$ must be zero, a contradiction.
Consequently, $\mathrm{F}_{12}$ acts via $\mathrm{G}_{12}$, which also implies $[P_1:L_2]=1$.
From this, it follows that $\mathrm{F}_{i2}$ acts via $\mathrm{G}_{i2}$, for $i=1,2,3$.

A similar argument shows that $\mathrm{F}_{i3}$ acts via $\mathrm{G}_{i3}$, for $i=1,2,3$,
and that
\begin{displaymath}
[P_2:L_3]=[P_3:L_3]=1,\quad [P_3:L_1]=[P_3:L_2]=0.
\end{displaymath}
This means that $B\cong A$ and that
each $\mathrm{F}_{ij}$ acts via the corresponding $\mathrm{G}_{ij}$. It now
follows by the usual arguments, see \cite[Proposition~9]{MM5}, that
$\overline{\mathbf{M}}$ is equivalent to a cell $2$-representation of $\cC_A$.
The claim of Theorem~\ref{mainresult} for the algebra
$\Bbbk(\xymatrix{\bullet\ar[r]\ar@/^/@{.}[rr]&\bullet\ar[r]&\bullet})$ follows.
\vspace{-2mm}

%\vspace{2mm}

\noindent
V.~M: Department of Mathematics, Uppsala University, Box. 480,
SE-75106, Uppsala, SWEDEN, email: {\tt mazor\symbol{64}math.uu.se}

\noindent
X.~Z: Department of Mathematics, East China Normal University, Minhang District,
Dong Chuan Road 500, Shanghai, 200241, P. R. CHINA,\\
email: {\tt scropure\symbol{64}126.com}

\end{document}